\documentclass[reqno]{amsart}
\usepackage[T1]{fontenc}
\usepackage{lmodern,amsmath,amssymb,amsthm}
\usepackage{needspace}
\usepackage[expansion=false]{microtype}
\usepackage[colorlinks=true,linkcolor=blue,citecolor=blue,urlcolor=blue]{hyperref}
\hypersetup{pdftitle={Proof of the positive trace gap conjecture},pdfauthor={Nikolay Bogachev}}
\newtheorem{theorem}{Theorem}[section]
\newtheorem{lemma}[theorem]{Lemma}
\newtheorem{proposition}[theorem]{Proposition}

\newtheorem{corollary}[theorem]{Corollary}
\theoremstyle{remark}
\newtheorem{remark}[theorem]{Remark}
\DeclareMathOperator{\tr}{tr}
\DeclareMathOperator{\Ad}{Ad}
\DeclareMathOperator{\Trd}{Trd}
\DeclareMathOperator{\Nrd}{Nrd}
\DeclareMathOperator{\Comm}{Comm}
\DeclareMathOperator{\Gap}{Gap}
\DeclareMathOperator{\GL}{GL}
\DeclareMathOperator{\SL}{SL}
\DeclareMathOperator{\PSL}{PSL}
\DeclareMathOperator{\Isom}{Isom}

\newcommand{\R}{\mathbb R}
\newcommand{\C}{\mathbb C}
\newcommand{\Q}{\mathbb Q}
\newcommand{\Z}{\mathbb Z}
\newcommand{\F}{\mathbb F}
\newcommand{\HH}{\mathbb H}
\newcommand{\G}{\widetilde\Gamma}
\newcommand{\OO}{\mathcal O}
\title[]{Proof of the positive trace gap conjecture}
\author{Nikolay Bogachev}
\address{Department of Computer and Mathematical Sciences, University of Toronto Scarborough, 1095 Military Trail, Toronto, ON M1C 1A3, Canada}
\email{n.bogachev@utoronto.ca}

\begin{document}
\raggedbottom
\begin{abstract}
We prove that a lattice $\Gamma$ in $\PSL_2(\R)$ or $\PSL_2(\C)$ has positive trace gap, meaning that its traces are uniformly separated, if and only if it is derived from an admissible quaternion algebra. For cocompact Fuchsian groups, this proves the positive trace gap conjecture attributed to Sarnak by Geninska and Leuzinger in 2008. The same characterization by quaternion algebras holds if the difference set of traces is not dense.

Our method also gives a similar result for lattices in $\SL_d(\R)$, for every $d\ge3$: a lattice $\Gamma<\SL_d(\R)$ has positive trace gap if and only if all its traces are integers. Equivalently, after conjugation, it has finite index in the norm-one group of an order in a central simple algebra of degree $d$ over $\Q$ that splits over $\R$. We also discuss spectral consequences and other applications of the results and techniques.
\end{abstract}
\maketitle

\section{Introduction}

Let $\F$ be $\R$ or $\C$, let $G=\PSL_2(\F)$, and let $\Gamma<G$ be a lattice. Write $\G=\pi^{-1}(\Gamma)$ for the preimage of $\Gamma$ in $\SL_2(\F)$ where $\pi:\SL_2(\F)\to\PSL_2(\F)$ denotes the quotient map by $\{\pm I\}$. Set
$$
T(\Gamma)=\{\tr \gamma \mid \gamma\in\G\},
\qquad
\Gap(X)=\inf \{|x-y| : x,y\in X,\ x\ne y\}.
$$
Note that $T(\Gamma)=-T(\Gamma)$. The number $\Gap(T(\Gamma))$ will be referred to as the {\em trace gap} of $\Gamma$. 

\begin{theorem}\label{thm:main}
Let $\Gamma$ be a lattice in $\PSL_2(\F)$, where $\F=\R$ or $\C$. The following conditions are equivalent:
\begin{enumerate}
\item $\Gamma$ is derived from an admissible quaternion algebra;
\item $\Gap(T(\Gamma))>0$;
\item $\overline{T(\Gamma)-T(\Gamma)}\ne\F$.
\end{enumerate}
If $\Gamma$ is derived from an admissible quaternion algebra over a number field $k$ of degree $n=[k:\Q]$, then
$$
\Gap(T(\Gamma))\ge
\begin{cases}
4^{1-n},&\F=\R,\\
2^{2-n},&\F=\C.
\end{cases}
$$
\end{theorem}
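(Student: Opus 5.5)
We will prove the cycle (1)$\Rightarrow$(2)$\Rightarrow$(3)$\Rightarrow$(1), with the quantitative bound coming out of the first step.

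\emph{(1)$\Rightarrow$(2) and the bound.} Assume $\Gamma$ is derived from an admissible quaternion algebra $A$ over $k$. Then $\G$ is contained, up to the identification $A\otimes_k\F\cong M_2(\F)$ at the distinguished place, in $\OO^1$ for an order $\OO$. Hence every trace is a reduced trace $\Trd(x)$ and lies in $\OO_k$. Admissibility says that $k$ has exactly one complex place, or exactly one real place, at which $A$ splits. At every other archimedean place $\sigma$, $A$ ramifies, so $\sigma(x)$ lies in the unit group of Hamilton's quaternions and $|\sigma(\tr x)|\le 2$.

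Take two distinct traces $t\ne t'$ and set $\alpha=t-t'\in\OO_k\setminus\{0\}$. Then $|N_{k/\Q}(\alpha)|\ge1$, and each factor at a nondistinguished place satisfies $|\sigma(\alpha)|\le4$. For $\F=\R$ there are $n-1$ such real factors, which gives $|\alpha|\ge 4^{1-n}$. For $\F=\C$ the distinguished complex place contributes $|\alpha|^2$ and the remaining $n-2$ places are real, so $|\alpha|^2\ge 4^{2-n}$, i.e.\ $|\alpha|\ge 2^{2-n}$.

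\emph{(2)$\Rightarrow$(3).} If the gap is positive, then $T-T$ misses the punctured disc of radius $\Gap$ around $0$, so its closure is not $\F$. This step is trivial.

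\emph{(3)$\Rightarrow$(1).} This is the main obstacle. By Takeuchi/Maclachlan--Reid, arithmeticity of $\Gamma$ derived from a quaternion algebra is equivalent to two conditions: the traces of $\Gamma^{(2)}$ (or of $\G$) are algebraic integers in a field $k$, and every nonidentity Galois embedding of $k$ sends them into a bounded set. So it suffices to show that non-density of $T-T$ forces both conditions.

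\begin{enumerate}
\item \emph{Integrality.} Use trace identities $\tr(\gamma^m)=$ Chebyshev polynomials in $\tr\gamma$, and $\tr(\gamma\delta)+\tr(\gamma\delta^{-1})=\tr\gamma\,\tr\delta$. These produce many elements of $T$ as polynomial combinations, and non-integrality at some prime would let us build traces $p$-adically small but archimedeanly... Instead, the cleaner route is via discreteness of $T$. Given (3), first show $T$ has no accumulation point, arguing that an accumulation would, via multiplying by a loxodromic $\delta$ and using $\tr(\gamma\delta^{\pm})$, spread differences densely. Then show the trace ring $\Z[T]$ is a discrete subring of $\F$.
\item \emph{Boundedness at other places.} With $\Z[\tr\G]$ finitely generated and discrete in $\F$, it is an order in a number field embedded with a single archimedean place of the right type. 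Discreteness of a finitely generated ring in one archimedean place forces all other embeddings to be bounded on the traces of the Zariski-dense group $\G$: otherwise some other place would give an unbounded, non-compact image, contradicting discreteness by Minkowski-type lattice arguments. Boundedness at the other places then means $A$ ramifies there, which is admissibility.
\end{enumerate}

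The key technical difficulty I expect is passing from ``differences not dense'' to ``the additive group generated by traces is discrete''. My first attempt would use Zariski density and Weyl/equidistribution. For loxodromic $\gamma,\delta$, the traces $\tr(\gamma^a\delta^b)$ behave like $c\lambda^a\mu^b+\dots$, and if the field generated were not discrete, suitable differences would fill an open set. Having done that, combining with the remaining (1)$\Rightarrow$(2) direction closes the equivalence.
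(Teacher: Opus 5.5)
Your proofs of (1)$\Rightarrow$(2), including both explicit bounds, and of (2)$\Rightarrow$(3) are correct and match the paper's. The implication (3)$\Rightarrow$(1), however, is not proved, and the intermediate goal you set for it is false.

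You want to deduce from non-density of $T(\Gamma)-T(\Gamma)$ that $\Z[T(\Gamma)]$, or the additive group generated by the traces, is discrete in $\F$. A discrete subring of $\R$ is $\Z$, and a discrete subring of $\C$ is $\Z$ or an imaginary quadratic order. But if $\Gamma$ is derived from an admissible $B$ over $k$, then $\Q(\tr\G)=k$. So the traces span a subgroup of $\OO_k$ of rank $[k:\Q]$, which is dense in $\F$ as soon as $[k:\Q]\ge2$ for $\F=\R$, or $[k:\Q]\ge3$ for $\F=\C$. These groups satisfy (1), hence (3), so your claim fails for them. Non-density of $T-T$ comes from the bound $|\sigma(t)|\le2$ at the non-distinguished places. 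That bound controls a single difference but not longer sums.

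The other sub-steps also fail:
\begin{enumerate}
\item Your first sub-step (no accumulation points of $T$) holds for every lattice and uses nothing.
\item The Minkowski step is reversed: boundedness at the other places gives discreteness at the distinguished one, but discreteness at one place says nothing about the others.
\item The integrality step breaks off.
\item You never show that the trace field is a number field. This is a genuine issue for Fuchsian lattices, whose traces are generically transcendental on Teichm\"uller space.
\item The heuristic that differences of $\tr(\gamma^a\delta^b)$ ``fill an open set'' is not an argument.
\end{enumerate}

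The missing idea is dynamical. With $G=\PSL_2(\F)$, the paper uses density of $\Gamma a\Gamma$ for $a\notin\Comm_G(\Gamma)$ (Ratner--Shah). This makes $\{\tr(\gamma A)\mid\gamma\in\G\}$ dense in $\F$ whenever $\pi(A)$ does not commensurate $\Gamma$. The paper then builds matrices whose twisted traces are rescaled trace differences. If $\mu^2=\tr g+2\ne0$, the square root $R(g)=(I+g)/\mu$ satisfies $R(g)^2=g$ and $\mu\,\tr(g'R(g))=\tr(g'g)+\tr g'\in T(\Gamma)-T(\Gamma)$. So (3) forces $\pi(R(\pm g))\in\Comm_G(\Gamma)$ for every $g\in\G$.

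Now take a diagonal loxodromic $h\in\G$ and a suitable $M\in\G$. The elements $R(h^nMh^n)h^{-n}$ are infinitely many distinct commensurator elements converging in $\SL_2(\F)$. Hence the commensurator is not discrete, and Margulis's theorem gives arithmeticity. Vinberg's description of commensurator elements as $sb$ with $s\in\F^\times$ and $b\in B_\Gamma^\times$ then gives $\pm g=R(\pm g)^2\in B_\Gamma^1$, so $\G\subset B_\Gamma^1$. Integrality of traces follows because a power of each $g$ lies in an order, and an order containing $\G$ is built from the trace pairing.

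Your outline contains no substitute for this route to arithmeticity, so Takeuchi's criterion is never reached.
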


Let us recall the quaternionic description of arithmetic lattices in these groups; see Maclachlan--Reid \cite{MR} and Takeuchi \cite{Takeuchi}. If $\F=\R$, let $k$ be a totally real number field and let $B/k$ be a quaternion algebra split at one distinguished real place and ramified at all other real places. If $\F=\C$, let $k$ have exactly one complex place and let $B/k$ be ramified at every real place. In either case the distinguished embedding $\sigma_0 : k \to \F$ identifies $B\otimes_{k,\sigma_0}\F$ with $M_2(\F)$. We call such a pair $(k,B)$ {\em admissible} and write $B^1=\{b\in B\mid\Nrd(b)=1\}$ for the quaternions of reduced norm one. For an order $\OO\subset B$, put $\OO^1=\OO\cap B^1$ and denote its image in $G = \PSL_2(\F)$ by $P\OO^1$. A lattice is \emph{arithmetic} if, after conjugation, it is commensurable with some $P\OO^1$, and it is said to be \emph{derived from a quaternion algebra} if, after conjugation, it is contained with finite index in some $P\OO^1$. Thus, the latter condition is stronger than arithmeticity.

The conjecture that the positive trace gap condition implies derivation from a quaternion algebra is attributed to Sarnak in Geninska--Leuzinger \cite[Conjecture 1.2(ii)]{GL}; see also Hao \cite[Conjecture 1.1(2)]{Hao23}. Sarnak's Schur lectures \cite{Sarnak} discuss the related \emph{bounded-clustering property}, which requires a uniform bound on the number of distinct traces in translates of a unit interval. More precisely, Sarnak \cite[footnote on page 211]{Sarnak} conjectured that a lattice
$\Gamma<\PSL_2(\R)$ is arithmetic if there exists a constant
$B=B(\Gamma)$ such that
$$
\#\bigl(T(\Gamma)\cap[L,L+1]\bigr)\le B \qquad\text{for every }L\in\Z.
$$
The analogous condition for Kleinian groups uses translates of a unit square in $\C$.

Luo and Sarnak \cite{LS} established the bounded-clustering property for arithmetic Fuchsian groups. The positive trace gap property for groups derived from an admissible quaternion algebra follows from the corresponding arithmetic bounds on traces; see \cite{LS,Sarnak}. Schmutz \cite{Schmutz} proposed a stronger arithmeticity conjecture, replacing bounded clustering by linear growth of the trace set. Namely, if $\Gamma<\PSL_2(\R)$ is a lattice and
$$
N_\Gamma(R):=\#\bigl(T(\Gamma)\cap[-R,R]\bigr)=O_\Gamma(R) \qquad (R\to\infty),
$$
then $\Gamma$ should be arithmetic. Bounded clustering implies this linear growth estimate, so Schmutz's conjecture implies Sarnak's bounded-clustering conjecture.

Schmutz~\cite{Schmutz} proposed a proof of the linear growth conjecture in the nonuniform case, but Geninska--Leuzinger~\cite{GL} identified a gap in the argument and proved Sarnak's bounded-clustering conjecture for nonuniform Fuchsian lattices. Hao \cite[Theorem A]{Hao23} recently proved both the bounded-clustering and positive-trace-gap characterizations for nonuniform lattices in $\PSL_2(\R)$ and $\PSL_2(\C)$. In a more recent work, Hao \cite[Theorem A]{Hao25} proved Schmutz's conjecture for nonuniform Fuchsian lattices, establishing arithmeticity under the weaker assumption $N_\Gamma(R)=o\bigl(R\log\log\log R\bigr).$

Related questions concerning traces and closed geodesics have also been studied for {\em semi-arithmetic} Fuchsian groups, introduced by Schmutz Schaller and Wolfart~\cite{SSW}. These groups are lattices that have a totally real invariant trace field and algebraic integer traces, but need not satisfy the boundedness condition at the other real embeddings in Takeuchi's arithmeticity criterion. Cosac and D\'oria \cite{CD} studied their systoles and constructed congruence towers with logarithmic systolic growth. Belolipetsky, Cosac, D\'oria, and Teixeira Paula \cite{BCDP} introduced the stretch, defined using equivariant Lipschitz maps, and proved that bounds on stretch, arithmetic dimension, and coarea leave only finitely many conjugacy classes of semi-arithmetic groups. Here the arithmetic dimension is the number of real places at which the invariant quaternion algebra splits.

More directly related to the questions considered here, Belolipetsky, Cosac, D\'oria, and Teixeira Paula \cite[Theorem 1.1]{BCDPmultiplicities} proved exponential growth of mean length multiplicities for semi-arithmetic Fuchsian groups of arithmetic dimension at most two admitting a modular embedding. Their result includes nonarithmetic groups. In the cocompact case of dimension two, they show that there exists $C_\Gamma>0$ such that $\#\bigl(T(\Gamma)\cap[R,R+1]\bigr) \le  C_\Gamma R^{1-\delta}$ for some $\delta=\delta(\Gamma)\in(0,1)$ and all $R\ge2$; see \cite[Lemma 3.2]{BCDPmultiplicities}. This gives $N_\Gamma(R)=O_\Gamma(R^{2-\delta})$, which is weaker than the linear-growth condition in Schmutz's conjecture. In particular, exponential growth of mean length multiplicities alone does not characterize arithmeticity. 

For arithmetic hyperbolic three-orbifolds, Marklof \cite{Marklof} studied the counting functions of distinct real and complex lengths and the resulting growth of mean length multiplicities. He obtained precise asymptotics for Bianchi groups and certain congruence subgroups, together with conditional asymptotics in the cocompact case.

The positive trace gap condition implies bounded clustering. Our arguments below do not prove the converse: density of the trace difference set produces close pairs of traces, which bounded clustering allows. The bounded-clustering conjecture for cocompact lattices remains open. We illustrate the distinction between the positive trace gap and bounded clustering conditions in the following remark. 

\begin{remark}
The zero trace gap corresponds to both arithmetic and nonarithmetic lattices. In Section \ref{sec:example}, we discuss an explicit example of an arithmetic lattice in $\PSL_2(\R)$ that is not derived from a quaternion algebra but whose trace set has zero gap. This example is obtained as the Fricke extension $\Gamma=\langle\Gamma_0(2),\pi(W)\rangle$ of the congruence group $\Gamma_0(2)$, where
$$
W=\frac1{\sqrt2}
\begin{pmatrix}0&-1\\2&0\end{pmatrix} \in\mathrm{SL}_2(\mathbb R). 
$$
Using work of Vinberg \cite{Vinberg} one can explicitly show that $\Gamma$ is not derived from a quaternion algebra. On the other hand, we prove that $\Gamma$ satisfies the bounded clustering property, $\Gap(T(\Gamma)) = 0$, and the trace difference set $T(\Gamma)-T(\Gamma) = 2\mathbb Z+\sqrt2\,\mathbb Z$ is dense in $\R$.
\end{remark}

The most involved implication of Theorem \ref{thm:main} is $(3) \implies (1)$; its proof is based on the combination of the following ideas. The important dynamical ingredient is the known fact (which we call the {\em double-coset lemma}) that a double coset $\Gamma a\Gamma$ is dense in $G$ whenever $a \in G$ does not commensurate the lattice $\Gamma < G$ (this fact is a consequence of the Ratner--Shah orbit closure theory). Then the key ingredient is purely algebraic: for $g\in\SL_2(\F)$ such that $\sqrt{\det(I+g)} \in \F$ is nonzero, we introduce the ``{\em square root}'' elements
$$
R(g) = \frac{I+g}{\sqrt{\det(I+g)}}
$$
and observe that $R(g)^2 = g$. For every $g$, this construction is well defined after replacing $g$ by $-g$ if necessary. These elements play a crucial role in several steps of our argument. First, we prove that if $g\in\G = \pi^{-1}(\Gamma) < \SL_2(\F)$ and $T(\Gamma)-T(\Gamma)$ is not dense in $\F$, then $\pi(R(g))\in\Comm_G(\Gamma)$ whenever $R(g)$ is defined. Then, under the same non-density assumption, we use a certain sequence $R(g_n)h^{-n}$, with $h\in\G$ a fixed loxodromic element, to show that $\Comm_G(\Gamma)$ is not discrete, and thus by Margulis's commensurator rigidity theorem we obtain that $\Gamma$ is arithmetic. The final step is to prove that $\Gamma$ is actually derived from a quaternion algebra. To that end, we use ideas and calculations of Vinberg \cite{Vinberg} applied to suitable elements $R(\pm g)$.

\medskip

Our methods also allow us to prove a similar statement for $\SL_d(\R)$ for every $d\ge3$. Here we use traces of the given matrices, without taking lifts. If $D$ is a central simple algebra over $\Q$, an order in $D$ is a subring containing $1$ which is a finitely generated $\Z$-module spanning $D$ over $\Q$. We write $D^1=\{x\in D \mid \Nrd(x)=1\}$ and $\OO^1=\OO\cap D^1$.

\begin{theorem}\label{thm:sld}
Let $d\ge3$, let $\Lambda$ be a lattice in $\SL_d(\R)$, and put $T_\Lambda=\{\tr g \mid g\in\Lambda\}$. The following conditions are equivalent:
\begin{enumerate}
\item After conjugation, $\Lambda$ is a subgroup of finite index in $\OO^1$, where $\OO$ is an order in a central simple algebra $D/\Q$ of degree $d$ and $D\otimes_\Q\R\cong M_d(\R)$;
\item $T_\Lambda\subset\Z$;
\item $\Gap(T_\Lambda)>0$;
\item $\overline{T_\Lambda-T_\Lambda}\ne\R$.
\end{enumerate}
In this case, $\Gap(T_\Lambda)\ge1$.
\end{theorem}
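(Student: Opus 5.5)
We prove the cycle $(1)\Rightarrow(2)\Rightarrow(3)\Rightarrow(4)\Rightarrow(1)$. The bound $\Gap(T_\Lambda)\ge1$ follows from $(2)$, since distinct integers are at distance at least $1$.

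\emph{The easy implications.} For $(1)\Rightarrow(2)$: after conjugation each $g\in\Lambda$ lies in $\OO^1$. Its reduced trace $\Trd(g)$, which under $D\otimes_\Q\R\cong M_d(\R)$ equals $\tr g$, is the coefficient of the reduced characteristic polynomial of an integral element of $\OO$. Hence it lies in $\Z$. The implication $(2)\Rightarrow(3)$ is immediate. For $(3)\Rightarrow(4)$: if $\Gap(T_\Lambda)=\delta>0$, then $T_\Lambda-T_\Lambda$ meets $(0,\delta)$ trivially, so its closure misses $(0,\delta)$ and is not $\R$.

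\emph{The main implication $(4)\Rightarrow(1)$.} First we make the two-point interval argument. Let $a\in G=\SL_d(\R)$ be arbitrary and suppose $a\notin\Comm_G(\Lambda)$. By the double-coset lemma, $\Lambda a\Lambda$ is dense in $G$. Fix $\gamma\in\Lambda$ and consider the function $x\mapsto\tr(x\gamma x^{-1}\cdots)$. We choose instead a continuous function built from $a$ whose values on $\Lambda a\Lambda$ are differences of traces. For example, take $g\in\Lambda$ with a ``square root'' $R(g)$ in $G$, obtained from a polynomial in $g$ (possible when $g$ is diagonalizable with positive eigenvalues, using functional calculus). Then $\tr(\lambda_1 R\lambda_2 R)$ ranges over a dense set as $\lambda_1,\lambda_2$ vary, if $R\notin\Comm$. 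Cleaner still, $x\mapsto \tr(x)-\tr(x\gamma)$ on $\Lambda a\Lambda$ gives differences of traces only when $a\in\Lambda$, so that choice fails. The right mechanism is the analogue of the $\PSL_2$ argument: elements of the form $p(g)$ for $g\in\Lambda$ and a polynomial $p$ with the property that traces of products $p(g)\lambda$ are $\Z$-combinations of traces of elements of $\Lambda$, such as $p(g)=\sum c_i g^i$ with integer $c_i$. Then $\tr(\lambda_1 p(g)\lambda_2)=\sum c_i\tr(\lambda_1g^i\lambda_2)$ lies in the additive group generated by $T_\Lambda$. In particular, the difference $\tr(\lambda_1(g-I)\lambda_2)=\tr(\lambda_1 g\lambda_2)-\tr(\lambda_1\lambda_2)$ lies in $T_\Lambda-T_\Lambda$.

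So we normalize. Take $g\in\Lambda$ with $\det(g-I)\ne0$ and set $a=(g-I)/|\det(g-I)|^{1/d}$, adjusting sign when $d$ is even. If $a\notin\Comm_G(\Lambda)$, then density of $\Lambda a\Lambda$ together with homogeneity shows that $\{c\,\tr(\lambda_1 a\lambda_2)\}$ is dense in $\R$, because $\tr$ is an open map $G\to\R$. Hence $T_\Lambda-T_\Lambda$ is dense, a contradiction. Therefore every such $g-I$, rescaled, commensurates $\Lambda$.

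Next we show that the commensurator is dense, by taking a loxodromic $h\in\Lambda$ and a sequence $g_n=h^n$. Then $(h^n-I)/|\det(h^n-I)|^{1/d}\cdot h^{-n}$ converges to an element of the form $I$ plus a small correction, and these elements are not eventually equal. So $\Comm_G(\Lambda)$ is nondiscrete. Margulis's commensurator theorem then makes $\Lambda$ arithmetic. Since $d\ge3$ the $\Q$-form is $D^1$ or a unitary group $SU$ of a hermitian form over a quadratic field with an involution. The outer forms should be excluded because their traces are not all real-rational; more precisely, $\Q[\Lambda]$ spanned by $\Lambda$ is a $\Q$-algebra.

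\emph{The main obstacle: the final upgrade.} We expect the hardest step to be upgrading from arithmeticity to ``contained in $\OO^1$''. The plan is as follows. From $(4)$ plus the fact that the closure of $T_\Lambda - T_\Lambda$ is a proper closed subgroup-like set, we deduce that the additive group generated by $T_\Lambda$ is discrete. Here we use that $T_\Lambda$ is closed under $x\mapsto$ suitable combinations: $\tr(\lambda\mu)+\tr(\lambda\mu^{-1})$-type identities, or directly the fact that the $\Z$-span lies in $T_\Lambda-T_\Lambda+\dots$. Discreteness gives $\Z$-span $=c\Z$. By Zariski density, the $\Q$-span $A=\Q[\Lambda]$ is a central simple algebra with $A\otimes\R=M_d(\R)$, so the trace field is $\Q$, which forces $c\in\Q$. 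Each $\tr(\lambda^m)\in c\Z$ for all $m$ makes eigenvalues algebraic integers once $c$ is controlled, hence $T_\Lambda\subset\Z$. Then $\Z[\Lambda]$ is an order $\OO$ in $A$, using the nondegenerate trace form with integral values. Finally, $\Lambda\subset\OO^1$ has finite index because $\OO^1$ is a lattice containing the lattice $\Lambda$.
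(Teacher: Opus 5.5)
Your easy implications are fine. Your first step of $(4)\Rightarrow(1)$, that $g-I$ commensurates $\Lambda$ whenever $\det(g-I)\ne0$, is exactly what the paper proves, up to a normalization slip. When $d$ is even and $\det(g-I)<0$, no real rescaling (sign change included) puts $g-I$ in $\SL_d(\R)$. You should instead take commensurators in $\GL_d(\R)$ and use that $\Lambda(g-I)\Lambda$ is dense in $\SL_d(\R)(g-I)$. Your nondiscreteness step is false as written. The matrix $(h^n-I)h^{-n}=I-h^{-n}$ has some eigenvalues tending to $1$ and others tending to $\infty$ in absolute value, so after normalizing the determinant the sequence diverges rather than converging near $I$. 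This is harmless only because, for $d\ge3$, Margulis's arithmeticity theorem applies to $\Lambda$ directly.

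The genuine gap is the upgrade from arithmeticity to (1). Your key claim is that (4) forces the additive group generated by $T_\Lambda$ to be discrete, and nothing you give justifies it. There is no additive trace identity in $\SL_d(\R)$, $d\ge3$, expressing $\Z$-combinations of traces as single differences of traces. Also, non-density of $X-X$ does not imply discreteness of the group generated by $X$: take $X=\Z\cup\{\sqrt2\}$. Excluding outer forms ``because their traces are not all real-rational'' is circular, since rationality of traces is what is to be proved. The remark that $\Q[\Lambda]$ is a $\Q$-algebra holds for every group and rules out nothing.

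The missing idea is to use the commensurating elements $g-I$ algebraically. By Skolem--Noether, an element of $\GL_d(\R)$ commensurating a Zariski-dense subgroup of $E^\times$ is a real scalar times an element of $E^\times$.
\begin{itemize}
\item In the outer case this gives $g-I=tb$ with $t\in L$ and $b^*b\in k^\times$. Then $(g-I)^*(g-I)=2I-g-g^{-1}$ would be scalar, which fails for generic $g$.
\item In the inner case the defining field is $\Q$, since any further archimedean place would contribute another noncompact factor. Write $g=sb$ and $g-I=tc$ with $b,c\in D^\times$, and compare coordinates in a $\Q$-basis of $D$ extending $\{I,b\}$. This gives $s,t\in\Q$, hence $g\in D^1$ for generic $g$. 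An arbitrary $g\in\Lambda$ is then handled by writing $g=(gh^{-1})h$ with $h$ and $gh^{-1}$ both generic.
\end{itemize}
Only after this do integrality of traces (from $g^r\in\OO_0^1$) and your order construction go through.
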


Let us turn our attention to the following spectral consequences of Theorem \ref{thm:main}. For a Fuchsian lattice $\Gamma$, let $\mathcal L(\Gamma)$ be the set of positive lengths of closed geodesics, including iterates and without multiplicities. Define
$$
\mathcal A(\Gamma)=\left\{a\in[0,\infty) \mid
\begin{array}{l}
\text{there exist }u_j<v_j\text{ in }\mathcal L(\Gamma),\ u_j\to\infty,\\ e^{v_j/2}(v_j-u_j)\to a \text{ as } j \to \infty  \end{array}\right\}.
$$

\begin{theorem}\label{thm:lengths}
Let $\Gamma<\PSL_2(\R)$ be a lattice.
\begin{enumerate}
\item The lattice $\Gamma$ is derived from a quaternion algebra if and only if there is a constant $c>0$ such that
\begin{equation}\label{eq:lengthsep}
|\ell-\ell'|\ge c\,e^{-\max\{\ell,\ell'\}/2} \quad \text{for all} \quad \ell,\ell'\in\mathcal L(\Gamma),\ \ell\ne\ell'.
\end{equation}
In this case one may take $c=2\Gap(T(\Gamma))$.
\item If $\Gamma$ is not derived from a quaternion algebra, then $\mathcal A(\Gamma)=[0,\infty)$. If it is derived, then $\mathcal A(\Gamma)$ is uniformly discrete and is contained in $[2\Gap(T(\Gamma)),\infty)$.
\end{enumerate}
\end{theorem}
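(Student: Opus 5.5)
The plan is to reduce everything to Theorem~\ref{thm:main} through the relation between lengths and traces. For a hyperbolic $\gamma\in\G$ with translation length $\ell$ we have $|\tr\gamma|=2\cosh(\ell/2)$. Hence $\mathcal L(\Gamma)$ corresponds bijectively to the set $T_+:=T(\Gamma)\cap(2,\infty)$ via $t=2\cosh(\ell/2)$. Iterates are automatically included, since powers of hyperbolic elements lie in $\G$. For $\ell<\ell'$ with traces $t<t'$, the mean value theorem gives
\[
t'-t=\sinh(\xi/2)\,(\ell'-\ell)\quad\text{for some }\xi\in(\ell,\ell').
\]
Since $\sinh(\xi/2)\le \tfrac12 e^{\ell'/2}$, we get $t'-t\le\tfrac12 e^{\ell'/2}(\ell'-\ell)$. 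In the other direction, $\sinh(\xi/2)\ge \tfrac12 e^{\ell/2}(1-e^{-\ell})$ and $e^{(\ell'-\ell)/2}\to1$ whenever $\ell'-\ell\to0$. So when both lengths tend to infinity with bounded difference,
\[
e^{\ell'/2}(\ell'-\ell)=2(t'-t)\bigl(1+o(1)\bigr).
\]

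For part (1), suppose first that $\Gamma$ is derived. Then $t'-t\ge\Gap(T(\Gamma))$, and the inequality above gives $|\ell'-\ell|\ge 2\Gap(T(\Gamma))\,e^{-\max\{\ell,\ell'\}/2}$, which is \eqref{eq:lengthsep} with $c=2\Gap(T(\Gamma))$. Conversely, assume \eqref{eq:lengthsep} holds but $\Gamma$ is not derived. By Theorem~\ref{thm:main}, (3) then fails, so $T(\Gamma)-T(\Gamma)$ is dense in $\R$. The main obstacle is to upgrade this density to the statement that, for every target $a\ge0$ and every $\varepsilon>0$, there are pairs $t<t'$ in $T_+$ with $t$ arbitrarily large and $|t'-t-a/2|<\varepsilon$. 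Density alone only yields differences near $a/2$, possibly with small traces.

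I would handle this as follows. Fix a hyperbolic $h\in\G$. Given a pair $x,y\in T(\Gamma)$ with $x-y$ close to $a/2$, realized by $g_1,g_2\in\G$, replace them by traces of products such as $g_ih^n$ or $h^n g_i h^{m}$, chosen so that the traces grow while the difference is controlled. This is an unconvincing route. A cleaner one is to revisit the proof of $(3)\Rightarrow(1)$ in Theorem~\ref{thm:main}: the double-coset lemma makes $\Gamma a\Gamma$ dense in $G$, which produces pairs of elements with prescribed large traces and prescribed difference simultaneously. Concretely, density of $\G a\G$ lets us approximate any matrix of prescribed large trace, and commensurator or noncommensurator elements convert this into pairs $\tr\gamma$, $\tr\gamma'$. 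So I would extract from the proof of Theorem~\ref{thm:main} the stronger conclusion that $(T_+\times T_+)\cap\{(t,t')\colon t>L\}$ has differences dense in $\R$ for every $L$. I expect this to be the hardest step.

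Granting that, the asymptotic relation gives pairs $u_j<v_j$ in $\mathcal L(\Gamma)$ with $u_j\to\infty$ and $e^{v_j/2}(v_j-u_j)\to a$ for every $a\ge0$. Taking $a=0$ contradicts \eqref{eq:lengthsep}, which proves part (1) and simultaneously $\mathcal A(\Gamma)=[0,\infty)$ in the non-derived case of part (2).

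For the derived case of part (2), the asymptotic relation shows that every $a\in\mathcal A(\Gamma)$ is a limit of $2(t'_j-t_j)$ with $t'_j-t_j$ bounded. The set $T(\Gamma)-T(\Gamma)$ is contained in $\sigma_0(\OO_k)$, and its elements are constrained at the other real places because $B$ is ramified there: traces of norm-one elements lie in $[-2,2]$ under each nontrivial embedding, so the differences lie in $[-4,4]$. Differences that are bounded at $\sigma_0$ and bounded at all other places form a finite set in each bounded window, so $T(\Gamma)-T(\Gamma)$ is uniformly discrete. Limits of $2(t'_j-t_j)$ therefore lie in $2(T(\Gamma)-T(\Gamma))\cap[2\Gap(T(\Gamma)),\infty)$, a uniformly discrete set. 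Since $\mathcal A(\Gamma)$ is a subset of it, $\mathcal A(\Gamma)$ is uniformly discrete and contained in $[2\Gap(T(\Gamma)),\infty)$.
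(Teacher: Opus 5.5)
Your framework (the bijection $T_+=f(\mathcal L(\Gamma))$ with $f(x)=2\cosh(x/2)$, the mean value theorem, and Theorem~\ref{thm:main}) is the right one, and your forward direction of (1) is the paper's. The gap is the step you flag and then simply grant: that for every $M$ the differences of traces in $T(\Gamma)\cap(M,\infty)$ are dense in $\R$. Your converse of (1) and the non-derived half of (2) both rest on it. The double-coset route you sketch is not carried out. As stated, density of $\G a\G$ controls $\tr(\gamma_1 a\gamma_2)=\tr(\gamma_2\gamma_1 a)$, which corresponds to the \emph{difference}. It says nothing about the size of the individual traces, such as $\tr(\gamma_2\gamma_1)$, and that size is exactly what you need.

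No dynamics is required here. The missing ingredient is that $T(\Gamma)$ is locally finite. Truncating the cusps gives a compact set meeting every closed geodesic, so proper discontinuity leaves only finitely many conjugacy classes of hyperbolic elements of translation length at most $R$. Also, $T(\Gamma)\cap[-2,2]$ is finite: elliptic traces come from finitely many cone points, and the remaining values are $\pm2$. Now fix a bounded open interval $I$. The differences lying in $I$ that involve a trace of absolute value at most $\max\{M,2\}$, or two traces of opposite sign, only use traces from a bounded set, so they take finitely many values. Choose an open subinterval of $I$ that avoids these values and apply density of $T(\Gamma)-T(\Gamma)$ there. This yields a difference of two same-sign traces of absolute value greater than $M$, and $T(\Gamma)=-T(\Gamma)$ moves them into $T_+$. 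This is the paper's proof of (2).

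For the converse of (1), the paper argues directly rather than by contradiction. Local finiteness of $\mathcal L(\Gamma)$ gives a positive minimum $\ell_0$. Pairs with $y-x\ge1$ then have trace difference at least $f(\ell_0+1)-f(\ell_0)$. Pairs with $y-x<1$ are handled by \eqref{eq:lengthsep} and the mean value theorem. The finite set $T(\Gamma)\cap[-2,2]$ is added separately, giving $\Gap(T(\Gamma))>0$. Your contrapositive version also works once local finiteness is available, since a zero gap then forces close pairs of large same-sign traces.

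A smaller point concerns the derived case of (2). Finiteness of $T(\Gamma)-T(\Gamma)$ in bounded windows gives local finiteness, which is enough to make your convergent sequences eventually constant. It does not give the uniform discreteness you then assert. Apply the conjugate bound to second differences instead: for distinct $z,z'\in T(\Gamma)-T(\Gamma)$, $z-z'$ is a nonzero algebraic integer of $k$ whose other conjugates have absolute value at most $8$, so $|z-z'|\ge 8^{1-n}$. With this fix, your derived-case argument coincides with the paper's.
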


\noindent Exponential separation of length spectra has been studied in a more general setting by Dolgopyat and Jakobson \cite{DJ}. Their Theorem~2.6 gives an exponential lower bound for groups with algebraic generators, with an exponent depending on the group: namely, they prove that if $X$ is a hyperbolic manifold such that the generators of $\pi_1 (X)$ belong to $\mathrm{PSO}_{n,1}(\overline{\Q})$ then there exist constants $c, \beta > 0$ such that $|\ell-\ell'|\ge c\,e^{-\beta\max\{\ell,\ell'\}}$ for all  $\ell,\ell'\in\mathcal L(\Gamma),\ \ell\ne\ell'.$ This holds true, in particular, for all finite-volume hyperbolic manifolds of dimension $n \ge 3$, see \cite[Corollary 2.7]{DJ}. Our Theorem~\ref{thm:lengths}(1) identifies all Fuchsian lattices for which the exponent $\beta = 1/2$ admits a positive uniform constant. We also note that Dolgopyat--Jakobson \cite[Theorem~3.1]{DJ} gives arbitrarily small gaps for a topologically generic set of hyperbolic surfaces. 

For a closed orientable hyperbolic surface $S=\Gamma\backslash\mathbb H^2$, write $\Gap(S)=\Gap(T(\Gamma))$, $\mathcal L(S)=\mathcal L(\Gamma)$, and $\mathcal A(S)=\mathcal A(\Gamma)$. Let $\mathcal M_g$ be the moduli space of such surfaces of genus $g$. Together with Borel's finiteness theorem \cite[Theorem 8.2]{Borel}, the preceding results give the following consequence.

\begin{corollary}\label{cor:moduli}
For every $g\ge2$, the set
$$
\mathcal E_g=\{S\in\mathcal M_g\mid \Gap(S)>0\}
$$
is finite. There is a constant $c_g>0$ such that every $S\in\mathcal E_g$ satisfies
$$
v-u\ge c_g e^{-v/2}
\qquad(u<v,\quad u,v\in\mathcal L(S)).
$$
Every $S\in\mathcal M_g\setminus\mathcal E_g$ has $\mathcal A(S)=[0,\infty)$.
\end{corollary}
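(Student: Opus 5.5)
Corollary~\ref{cor:moduli} follows from Theorem~\ref{thm:main}, Theorem~\ref{thm:lengths} and Borel's finiteness theorem; we treat its three assertions in turn.

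\emph{Finiteness of $\mathcal E_g$.} Let $S=\Gamma\backslash\HH^2\in\mathcal E_g$. By Theorem~\ref{thm:main}, $(2)\Rightarrow(1)$, the group $\Gamma$ is derived from an admissible quaternion algebra $(k,B)$. In particular $\Gamma$ is arithmetic. It is also torsion-free and cocompact, with covolume $4\pi(g-1)$ by Gauss--Bonnet. Borel's theorem \cite[Theorem 8.2]{Borel} states that for each $V>0$ there are only finitely many conjugacy classes of arithmetic Fuchsian groups of covolume at most $V$. Each $S\in\mathcal E_g$ is determined up to isometry by the conjugacy class of $\Gamma$ in $\PSL_2(\R)$. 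Hence $\mathcal E_g$ is finite.

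\emph{Uniform length separation.} For $S\in\mathcal E_g$, Theorem~\ref{thm:lengths}(1) gives
$$
|\ell-\ell'|\ge 2\Gap(T(\Gamma))\,e^{-\max\{\ell,\ell'\}/2}
$$
for all distinct $\ell,\ell'\in\mathcal L(S)$. If $u<v$, then $\max\{u,v\}=v$, so $v-u\ge 2\Gap(S)\,e^{-v/2}$. Set
$$
c_g=\min_{S\in\mathcal E_g} 2\Gap(S).
$$
This is a minimum over a finite set of positive numbers, so $c_g>0$. If $\mathcal E_g$ is empty, any $c_g>0$ works vacuously. We could also make $c_g$ explicit: Borel's theorem bounds the degree $n=[k:\Q]$ in terms of $g$, and the bound $\Gap\ge 4^{1-n}$ from Theorem~\ref{thm:main} then applies. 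Finiteness of $\mathcal E_g$ already suffices, however.

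\emph{The complement.} If $S\notin\mathcal E_g$, then $\Gap(T(\Gamma))=0$. By Theorem~\ref{thm:main}, $(1)\Rightarrow(2)$, this means $\Gamma$ is not derived from a quaternion algebra. Theorem~\ref{thm:lengths}(2) then gives $\mathcal A(S)=[0,\infty)$.

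The step that needs most care is applying Borel's theorem correctly. It counts maximal arithmetic groups, or arithmetic groups up to commensurability with a covolume bound. Finiteness of the groups of bounded covolume then follows because each such group has bounded index in a maximal group of bounded covolume, and a finitely generated group has only finitely many subgroups of given index. This is the standard form of the statement cited.
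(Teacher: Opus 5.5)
Your proof is correct and follows the paper's argument. You use Theorem~\ref{thm:main} together with Borel's finiteness theorem to get finiteness of $\mathcal E_g$, take $c_g=2\min_{S\in\mathcal E_g}\Gap(S)$ via Theorem~\ref{thm:lengths}(1), and use Theorem~\ref{thm:lengths}(2) for the complement. Your extra remarks spell out steps the paper leaves implicit: the contrapositive of $(1)\Rightarrow(2)$ in Theorem~\ref{thm:main}, which turns $\Gap(S)=0$ into non-derivation, and the bounded-index reduction of Borel's theorem to maximal arithmetic groups.
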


For closed orientable hyperbolic surfaces, the Laplace spectrum determines the length set and hence the trace gap. In particular, the property of being derived from a quaternion algebra is preserved under Laplace isospectrality. This last assertion is classical; see Reid \cite[\S2 and Theorem 5.4]{Reid}. 

Finally, we observe that for hyperbolic lattices $\Gamma < \mathrm{Isom}^+(\HH^n)$, the double-coset lemma gives a metric criterion which allows to determine if a finite group $F < \Isom(\HH^n) $ belongs to the commensurator $\Comm_{\Isom(\HH^n)}(\Gamma)$. This is closely related to the recent work Belolipetsky et al \cite{BBKS} studying the behaviour of totally geodesic subspaces of hyperbolic orbifolds. We note, however, that our argument for the positive trace gap conjecture does not immediately extend to lattices in $\operatorname{Isom}(\mathbb H^n) = \mathrm{PO}_{n,1}(\R)$ for $n>3$, since the square-root construction of $R(g)$ relies on the degree-two Cayley--Hamilton theorem, whereas a scalar multiple of $I+g$ need not belong to the corresponding orthogonal group $\mathrm{O}_{n,1}$.

\subsection*{Organization of the paper} Section~\ref{sec:prelim} recalls the algebraic preliminaries, Vinberg's trace-field results, and the double-coset lemma. Section~\ref{sec:roots} introduces square roots $R(g)$ and proves Theorem~\ref{thm:main}, and Section~\ref{sec:sld} proves Theorem~\ref{thm:sld}. Section~\ref{sec:orbits} discusses a metric criterion for the commensurator of hyperbolic lattices. Section~\ref{sec:lengths} gives the consequences for length gaps, moduli spaces, and finite covers. Section~\ref{sec:example} gives the arithmetic example with zero trace gap.

\subsection*{Acknowledgements and AI use} I thank Misha Belolipetsky, Peter Sarnak, and Leone Slavich for their comments. 

I use large language models to discuss ideas, perform some computations, search the literature, and check the proofs. The paper is written entirely by me. 

After the manuscript was finished, in collaboration with Alexander Kolpakov we had an OpenAI model propose solutions to both Sarnak's and Schmutz’s conjectures in a guided Codex session, see \cite{BK}.

\section{Preliminaries}\label{sec:prelim}

\subsection{Central simple algebras and arithmetic lattices}\label{subsec:algebras}

Let $k$ be a field of characteristic zero. A \emph{central simple algebra} over $k$ is a finite-dimensional associative algebra $A$ with identity, center $k$, and no nonzero proper two-sided ideals. Its dimension is a perfect square; write $\dim_k A=d^2$, where $d$ is called its \emph{degree}. By the Artin--Wedderburn theorem, $A\cong M_r(\Delta)$ for a central division algebra $\Delta/k$. In particular, a central simple algebra need not be a division algebra. We say that $A$ \emph{splits} over an extension $K/k$ if $A\otimes_k K\cong M_d(K)$; see Maclachlan--Reid \cite[\S2.8]{MR}.

Choose a splitting field $K$ and an isomorphism $\iota:A\otimes_k K\cong M_d(K)$. The \emph{reduced trace} and \emph{reduced norm} are
$$
\Trd(x)=\tr\iota(x),\qquad \Nrd(x)=\det\iota(x).
$$
These functions take values in $k$ and do not depend on the choice of $K$ and $\iota$. Thus they agree with the ordinary trace and determinant in any matrix realization obtained by splitting $A$. The reduced trace is $k$-linear, and the reduced norm is multiplicative. We write
$$
A^1=\{x\in A^\times \mid \Nrd(x)=1\}.
$$

Two standard facts will be used below. The pairing $(x,y)\mapsto\Trd(xy)$ on $A$ is nondegenerate, as follows by extending scalars to a splitting field and using the matrix trace pairing. Also, every $k$-algebra automorphism of $A$ is the conjugation by an element of $A^\times$ due to the Skolem--Noether theorem; see \cite[\S2.9]{MR}.

When $k$ is a number field, let $\OO_k$ be its ring of integers. An \emph{order} in $A$ is a subring $\OO\subset A$ containing $1$ which is a finitely generated $\OO_k$-module spanning $A$ over $k$. The reduced characteristic polynomial of each element of $\OO$ has coefficients in $\OO_k$. In particular,
$$
\Trd(\OO)\subset\OO_k,\qquad
\Nrd(\OO)\subset\OO_k.
$$
We put $\OO^1=\OO\cap A^1$. Orders need not be maximal. For a fixed $A$, their norm-one groups are commensurable.

A \emph{quaternion algebra} over $k$ is a central simple algebra of degree two. It has a presentation
$$
B=k\oplus ki\oplus kj\oplus kij,
\qquad i^2=a,\quad j^2=b,\quad ij=-ji,
\qquad a,b\in k^\times.
$$
It is either a division algebra or isomorphic to $M_2(k)$. Its standard involution is given by $\overline{x}=\Trd(x)-x$, and satisfies
$$
x+\overline{x}=\Trd(x),\qquad
x\overline{x}=\Nrd(x),\qquad
x^2-\Trd(x)x+\Nrd(x)=0.
$$
At a real place $v$ of $k$, the algebra $B_v=B\otimes_k k_v$ is either $M_2(\R)$ or Hamilton's quaternion algebra $\mathbb H$. In the first case we say that $B$ is \emph{split} at $v$; in the second it is \emph{ramified}. Their norm-one groups are $\SL_2(\R)$ and the compact group of unit quaternions, respectively. At every complex place, $B_v\cong M_2(\C)$; see \cite[\S\S2.1, 2.5]{MR}.

The quaternionic constructions of arithmetic Fuchsian and Kleinian lattices use a number field $k$ and a quaternion algebra $B/k$ with the following properties:
\begin{enumerate}
\item For $\PSL_2(\R) \cong  \Isom^+ (\HH^2)$, the field $k$ is totally real and $B$ is split at one real place and ramified at all the others.
\item For $\PSL_2(\C) \cong  \Isom^+ (\HH^3)$, the field $k$ has exactly one complex place and $B$ is ramified at every real place.
\end{enumerate}
We call these pairs $(k,B)$ \emph{admissible}. A complex place means a conjugate pair of embeddings. The distinguished place gives an embedding $B\hookrightarrow M_2(\F)$, where $\F=\R$ or $\C$, and the projective image $P\OO^1$ is a lattice in $\PSL_2(\F)$. It is cocompact if and only if $B$ is a division algebra; see \cite[\S\S8.1--8.2]{MR}.

A lattice in $\PSL_2(\F)$ is \emph{arithmetic} if, after conjugation, it is commensurable with some $P\OO^1$ from this construction. It is \emph{derived from a quaternion algebra} if, after conjugation, it is contained with finite index in some $P\OO^1$. 

\medskip
For $\SL_d(\R)$, $d\ge3$, the \emph{inner} construction uses a central simple algebra $D/\Q$ of degree $d$ with $D\otimes_\Q\R\cong M_d(\R)$. An order $\OO\subset D$ gives an arithmetic lattice $\OO^1<\SL_d(\R)$, cocompact exactly when $D$ is a division algebra. Its traces are integers.

The \emph{outer} construction uses a quadratic extension $L/k$, with $k$ totally real, and a central simple algebra $E/L$ of degree $d$ with an involution $*$ of the second kind. This is an additive map satisfying
$$
(xy)^*=y^*x^*,\qquad (x^*)^*=x,
$$
whose restriction to $L$ is the nontrivial automorphism of $L/k$. The corresponding special unitary group has
$$
\mathbf{SU}(E,*)(k)=\{x\in E^\times \mid x^*x=1,\ \Nrd(x)=1\}.
$$
At one real place $v$, we require $L\otimes_{k,v}\R\cong\R\times\R$ and $E$ to split over both factors; this gives $\mathbf{SU}(E,*)(k_v)\cong\SL_d(\R)$. We also require compactness at all other archimedean places. For a $*$-invariant order $\OO\subset E$, projection of $\mathbf{SU}(E,*)(k)\cap\OO$ to the distinguished factor is then an arithmetic lattice.

Every arithmetic lattice in $\SL_d(\R)$ is, up to conjugation and commensurability, obtained from one of these two constructions; see \cite[Propositions 6.8.9, 6.8.14 and \S18.4]{MorrisAG}. Margulis's arithmeticity theorem applies to every lattice in $\SL_d(\R)$ for $d\ge3$ \cite[Theorem 5.2.1]{MorrisAG}. Our higher-rank theorem therefore has two further conclusions: it excludes the outer construction and places the entire lattice in the norm-one group of an order over $\Q$.

\subsection{Invariant trace fields and quaternion algebras}\label{subsec:vinberg}

Let $\Gamma<\PSL_2(\F)$ be a lattice, and let $\Gamma^{(2)}$ be the subgroup generated by squares of elements of $\Gamma$. We distinguish the ordinary trace field
$$
L_\Gamma=\Q(\tr g \mid g\in\G)
$$
from the invariant trace field
\begin{equation}\label{eq:invariantfield}
k_\Gamma=\Q((\tr g)^2 \mid g\in\G)=\Q(\tr h \mid h\in\widetilde{\Gamma^{(2)}}) \subset L_\Gamma.
\end{equation}
Here $\widetilde{\Gamma^{(2)}}$ denotes the preimage of $\Gamma^{(2)}$ under the quotient map $\pi: \SL_2(\F) \to \PSL_2(\F)$. The last equality is \cite[Theorem 1, Corollary 2, p.~180]{Vinberg}. Since
$$
\tr\Ad(g)=(\tr g)^2-1,
$$
the field $k_\Gamma$ is also generated by the adjoint traces. It is the smallest field of definition in the adjoint representation and is a commensurability invariant; see Vinberg \cite{Vin71} and \cite[p.~179, equations (1)--(2)]{Vinberg}.

The associated invariant quaternion algebra is
\begin{equation}\label{eq:invariantalgebra}
B_\Gamma=\operatorname{span}_{k_\Gamma}\widetilde{\Gamma^{(2)}}
\subset M_2(\F).
\end{equation}
It is a quaternion algebra over $k_\Gamma$, and $B_\Gamma\otimes_{k_\Gamma}\F\cong M_2(\F)$; see \cite[p.~186, equation (19)]{Vinberg} and \cite[Chapter 3]{MR}. 

These definitions do not require arithmeticity. In particular, $k_\Gamma$ need not be a number field. If $\Gamma$ is arithmetic, then $(k_\Gamma,B_\Gamma)$ is the admissible pair defining its commensurability class; see the proof of \cite[Theorem 4, pp.~187--189]{Vinberg} and \cite[Chapter 8]{MR}. We use this only after establishing arithmeticity.

\subsection{Density of double cosets and traces}\label{sec:double}

In this subsection, $G$ is a connected noncompact real linear Lie group with finite center and simple real Lie algebra, generated by one-parameter unipotent subgroups. The groups $\SL_d(\R)$ and $\SL_d(\C)$, $d\ge2$, and $\operatorname{Isom}^+(\mathbb H^n)$, $n\ge2$, have these properties. {\em Commensurability} of two subgroups of $G$ means that the intersection of two subgroups has finite index in both. The {\em commensurator} of $\Gamma < G$ in $G$ is the group
$$
\Comm_G(\Gamma)=\{a\in G\mid \Gamma \text{ and } a^{-1}\Gamma a
\text{ are commensurable}\}.
$$

The following lemma follows from Shah \cite[Corollary 1.5]{Shah}, applied to $a^{-1}\Gamma_1a$ and $\Gamma_2$, followed by left translation by $a$. Shah treats products of lattices in connected semisimple groups without compact factors, assuming that at least one lattice is irreducible. 

\begin{lemma}[Shah {\cite[Corollary 1.5]{Shah}}]\label{lem:double}
Let $\Gamma_1,\Gamma_2<G$ be lattices and let $a\in G$. If $\Gamma_2$ and $a^{-1}\Gamma_1a$ are commensurable, then $\Gamma_1a\Gamma_2$ is a finite union of right translates of $\Gamma_1$, and is closed and discrete. Otherwise $\Gamma_1a\Gamma_2$ is dense in $G$. In particular, $\Gamma a\Gamma$ is dense whenever $a\notin\Comm_G(\Gamma)$.
\end{lemma}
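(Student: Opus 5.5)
The reduction is to Shah's Corollary 1.5 in the paper's setup. Put $\Gamma_1'=a^{-1}\Gamma_1a$. This is again a lattice in $G$, and $\Gamma_1a\Gamma_2=a\,(\Gamma_1'\Gamma_2)$, so left translation by $a$ carries the set $\Gamma_1'\Gamma_2$ to $\Gamma_1a\Gamma_2$. Since left translation is a homeomorphism of $G$, density, closedness and discreteness transfer between the two sets. It therefore suffices to show: for lattices $\Lambda_1,\Lambda_2<G$, the product $\Lambda_1\Lambda_2$ is a finite union of cosets of $\Lambda_1$ when the two are commensurable, and is dense otherwise.

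\emph{Commensurable case.} Let $\Delta=\Lambda_1\cap\Lambda_2$, which has finite index in $\Lambda_2$. Write $\Lambda_2=\bigcup_{i=1}^m\Delta\lambda_i$. Since $\Delta\subset\Lambda_1$, we get $\Lambda_1\Lambda_2=\bigcup_i\Lambda_1\lambda_i$, a finite union of right translates of $\Lambda_1$. Each translate is closed and discrete, so the finite union is too. Translating back by $a$ gives $\Gamma_1a\Gamma_2=\bigcup_i\Gamma_1(a\lambda_i)$, where $\lambda_i\in\Gamma_2$.

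\emph{Non-commensurable case.} Here I invoke Shah's theorem, which needs checking only that its hypotheses hold. Consider $G\times G$ with the lattice $\Lambda_1\times\Lambda_2$ and the diagonal subgroup $\Delta G$. The group $\Delta G$ is generated by unipotent one-parameter subgroups, because $G$ is. The orbit $\Delta G\cdot(\Lambda_1\times\Lambda_2)$ is dense in $G\times G$ exactly when $\Lambda_1\Lambda_2$ is dense in $G$, via the map $(x,y)\mapsto x^{-1}y$.

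By Ratner's orbit closure theorem, as formulated in Shah's corollary, this orbit closure is the orbit of a closed subgroup $H\supseteq\Delta G$. Since $\mathfrak g$ is simple, the only subalgebras of $\mathfrak g\oplus\mathfrak g$ containing the diagonal are the diagonal itself and the whole algebra. So either the orbit is dense, or $H^\circ=\Delta G$ and the orbit is closed. A closed orbit means $\Delta G\cap(\Lambda_1\times\Lambda_2)$, which is isomorphic to $\Lambda_1\cap\Lambda_2$, is a lattice in $\Delta G\cong G$. That forces $\Lambda_1\cap\Lambda_2$ to have finite index in both $\Lambda_i$, because a lattice contained in a lattice has finite index. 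This is commensurability, which contradicts the hypothesis, so the orbit is dense. The hypotheses are satisfied: $G$ is noncompact simple with finite center, and both lattices are irreducible since $G$ is simple.

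The final statement follows by taking $\Gamma_1=\Gamma_2=\Gamma$: here $a^{-1}\Gamma a$ is commensurable with $\Gamma$ precisely when $a\in\Comm_G(\Gamma)$.

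The main point to handle with care is the dichotomy in the non-commensurable case. It rests on identifying the intermediate subgroups $H$, using simplicity of $\mathfrak g$ and the finite center of $G$, and on the finite-covolume argument that turns a closed orbit into commensurability.
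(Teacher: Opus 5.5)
Your proposal is correct and follows the paper's route: you pass to $a^{-1}\Gamma_1a$, apply Shah's Corollary~1.5 to the pair $(a^{-1}\Gamma_1a,\Gamma_2)$, and left-translate by $a$. You also do the commensurable case by hand and sketch the standard Ratner-based proof of Shah's dichotomy via the diagonal orbit in $(G\times G)/(\Lambda_1\times\Lambda_2)$, which the paper only cites; the one step you leave implicit is that the finite $H$-invariant measure restricts to the open subgroup $H^\circ=\Delta G$, which is what makes $\Lambda_1\cap\Lambda_2$ a lattice.
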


\begin{corollary}\label{cor:twisted}
Let $\Lambda<\SL_d(\F)$ be a lattice, where $d\ge2$ and $\F=\R$ or $\C$. If $A\in\GL_d(\F)$ does not commensurate $\Lambda$, then $\{\tr(\gamma A)\mid \gamma\in\Lambda\}$ is dense in $\F$.
\end{corollary}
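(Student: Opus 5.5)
The plan is to reduce the statement to Lemma~\ref{lem:double} with $a=I$, by folding $A$ into a second lattice. A direct reduction to $\Lambda a\Lambda$ with $a\in\SL_d(\F)$ does not always work: when $\F=\R$, $d$ is even and $\det A<0$, no real scalar multiple of $A$ lies in $\SL_d(\R)$. So I would instead put
$$
\Gamma_2=A\Lambda A^{-1}.
$$
This is again a lattice in $\SL_d(\F)$, because conjugation by $A\in\GL_d(\F)$ preserves $\SL_d(\F)$ and Haar measure up to a constant. By definition, $A$ commensurates $\Lambda$ exactly when $\Gamma_2$ and $\Lambda$ are commensurable. In our situation they are not, so Lemma~\ref{lem:double} with $\Gamma_1=\Lambda$ and $a=I$ gives that the product set $\Lambda\Gamma_2$ is dense in $\SL_d(\F)$.

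Next I would rewrite the trace set in terms of this product set. For $x=\gamma_1A\gamma_2A^{-1}\in\Lambda\Gamma_2$, cyclicity of the trace gives
$$
\tr(xA)=\tr(\gamma_1A\gamma_2)=\tr(\gamma_2\gamma_1A).
$$
Since $\gamma_2\gamma_1$ ranges over $\Lambda$, and taking $\gamma_2=I$ recovers every $\gamma\in\Lambda$, we get $\{\tr(\gamma A)\mid\gamma\in\Lambda\}=\{\tr(xA)\mid x\in\Lambda\Gamma_2\}$. The map $x\mapsto\tr(xA)$ is continuous on $\SL_d(\F)$. Hence the image of the dense set $\Lambda\Gamma_2$ is dense in $\tr(\SL_d(\F)A)$, and it remains to show that $\tr(\SL_d(\F)A)=\F$.

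For this last step I would use square-zero unipotents. For $i\ne j$ let $E_{ij}$ be the elementary matrix, so $E_{ij}^2=0$ and $g(I+tE_{ij})\in\SL_d(\F)$ for all $g\in\SL_d(\F)$ and $t\in\F$. Then
$$
\tr\bigl(g(I+tE_{ij})A\bigr)=\tr(gA)+t\,\tr(gE_{ij}A)=\tr(gA)+t\,(Ag)_{ji},
$$
which is affine in $t$. If its slope $(Ag)_{ji}$ is nonzero for some $g$ and some $i\ne j$, the image is all of $\F$. Suppose instead that every off-diagonal entry of $Ag$ vanishes for all $g\in\SL_d(\F)$. Taking $g=I$ makes $A$ diagonal. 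Taking $g$ a signed permutation matrix in $\SL_d(\F)$ moving index $i$ to $j\ne i$ puts $\pm A_{ii}$ in an off-diagonal position, so $A_{ii}=0$ for every $i$ and hence $A=0$. This contradicts $A\in\GL_d(\F)$.

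The only real subtlety is the first step, namely avoiding the determinant normalization by passing to the conjugate lattice $\Gamma_2$. After that the argument is formal, and the surjectivity of $\tr(\,\cdot\,A)$ on $\SL_d(\F)$ is an elementary computation.
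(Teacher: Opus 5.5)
Your proposal is correct and follows the paper's proof: pass to the conjugate lattice $A\Lambda A^{-1}$, apply Lemma~\ref{lem:double} with $a=I$, then use cyclicity of the trace and continuity. The only difference is the surjectivity step. The paper simply exhibits the matrix $\begin{pmatrix} z-d+2 & -\det A\\ 1&0\end{pmatrix}\oplus I_{d-2}$, which has determinant $\det A$ and trace $z$, whereas your unipotent-slope argument reaches the same conclusion slightly less directly.
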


\begin{proof}
Put $G=\SL_d(\F)$ and $\Lambda'=A\Lambda A^{-1}$. Since $\Lambda'$ is a lattice in $G$ and is not commensurable with $\Lambda$, Lemma~\ref{lem:double} implies that $\Lambda\Lambda'$ is dense in $G$. Right translation by $A$ therefore makes
$$
\Lambda A\Lambda=\Lambda\Lambda'A
$$
dense in $GA=\{B\in\GL_d(\F)\mid \det B=\det A\}$. The trace function maps this set onto $\F$: for any $z\in\F$, the matrix
$$
\begin{pmatrix}z-d+2&-\det A\\1&0\end{pmatrix}\oplus I_{d-2}
$$
has determinant $\det A$ and trace $z$. Finally, cyclic invariance of traces gives
$$
\{\tr(\gamma_1A\gamma_2)\mid \gamma_1,\gamma_2\in\Lambda\}
=\{\tr(\gamma A)\mid \gamma\in\Lambda\}.
$$
Continuity of the trace function proves the claim.
\end{proof}

For a lattice $\Gamma<\PSL_2(\F)$ and its preimage $\G < \SL_2(\F)$, a matrix $A\in\SL_2(\F)$ commensurates $\G$ if and only if $\pi(A)$ commensurates $\Gamma$. Thus the corollary applies to lattices in $\PSL_2(\F)$ without making a choice of lifts.

\section{Proof of Theorem \ref{thm:main}}\label{sec:roots}

\subsection{Square roots and arithmeticity}

In this section $G=\PSL_2(\F)$, as in the introduction.

\begin{lemma}\label{lem:root}
Let $g\in\SL_2(\F)$ and suppose that $\mu \in\F^\times$ satisfies $\mu^2=\tr g+2$. Then
$$
R(g)=\frac{I+g}{\mu}\in\SL_2(\F), \qquad R(g)^2=g,
$$
and, for every $g'\in\SL_2(\F)$,
\begin{equation}\label{eq:roottrace}
\mu\,\tr(g' R(g))=\tr(g' g)+\tr g'.
\end{equation}
Moreover, if $g\in\G$ and $\overline{T(\Gamma)-T(\Gamma)}\ne\F$, then $\pi(R(g))\in\Comm_G(\Gamma)$.
\end{lemma}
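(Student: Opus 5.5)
The algebraic identities come first and follow from the degree-two Cayley--Hamilton theorem. For $g\in\SL_2(\F)$ we have $g^2=(\tr g)g-I$, so
$$
(I+g)^2=I+2g+g^2=(\tr g+2)g=\mu^2 g,
$$
which gives $R(g)^2=g$. For the determinant, we use $\det(I+g)=1+\tr g+\det g=\tr g+2=\mu^2$. Since $R(g)$ is a $2\times2$ matrix, $\det R(g)=\det(I+g)/\mu^2=1$, so $R(g)\in\SL_2(\F)$. Identity \eqref{eq:roottrace} is immediate from linearity of the trace: $\mu\tr(g'R(g))=\tr(g'(I+g))=\tr g'+\tr(g'g)$.

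For the commensurator statement, I will argue by contraposition using Corollary~\ref{cor:twisted}. Suppose $g\in\G$ and $\pi(R(g))\notin\Comm_G(\Gamma)$. By the remark after Corollary~\ref{cor:twisted}, $A=R(g)\in\SL_2(\F)$ then does not commensurate $\G$. Corollary~\ref{cor:twisted}, applied with $\Lambda=\G$, shows that $\{\tr(\gamma R(g))\mid\gamma\in\G\}$ is dense in $\F$. Multiplying by the fixed nonzero scalar $\mu$ preserves density, so by \eqref{eq:roottrace} the set
$$
\{\tr(\gamma g)+\tr\gamma\mid \gamma\in\G\}
$$
is dense in $\F$. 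Now $\gamma g\in\G$, and $-\gamma\in\G$ because $\G$ is a full preimage under $\pi$ and hence contains $-I$. Thus $\tr(\gamma g)\in T(\Gamma)$ and $\tr\gamma=-\tr(-\gamma)$ with $\tr(-\gamma)\in T(\Gamma)$, so each element $\tr(\gamma g)+\tr\gamma$ lies in $T(\Gamma)-T(\Gamma)$. Hence $T(\Gamma)-T(\Gamma)$ is dense in $\F$, contradicting the hypothesis $\overline{T(\Gamma)-T(\Gamma)}\ne\F$.

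There is no serious obstacle here. The only points needing care are the use of $T(\Gamma)=-T(\Gamma)$, via $-I\in\G$, to turn the sum into a difference, and the check that commensuration of the lift $R(g)$ of $\G$ is equivalent to commensuration of $\pi(R(g))$ of $\Gamma$, which the paper already notes.
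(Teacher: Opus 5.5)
Your proposal is correct and follows essentially the same route as the paper: Cayley--Hamilton gives $R(g)^2=g$ and $\det R(g)=1$, linearity gives \eqref{eq:roottrace}, and Corollary~\ref{cor:twisted} applied to $\G$, together with $-I\in\G$, places $\tr(\gamma g)+\tr\gamma=\tr(\gamma g)-\tr(-\gamma)$ in $T(\Gamma)-T(\Gamma)$. The points you spell out (scaling by $\mu\neq0$ preserves density, and commensuration of $R(g)$ is equivalent to commensuration of $\pi(R(g))$) are exactly what the paper uses implicitly.
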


\begin{proof}
By the Cayley--Hamilton theorem, $g^2-(\tr g)g+I=0$, and therefore $(g+I)^2=(\tr g+2)g$. Together with $\det(g+I)=\tr g+2$, this shows that $R(g)$ has determinant one and satisfies $R(g)^2=g$. The trace identity \eqref{eq:roottrace} follows by linearity. When $g'\in\G$, its right-hand side belongs to the difference set, since
$$
\tr(g' g)-\tr(-g')\in T(\Gamma)-T(\Gamma).
$$
If $\pi(R(g))$ did not commensurate $\Gamma$, Corollary~\ref{cor:twisted} would make the left-hand side dense in $\F$, a contradiction.
\end{proof}

\begin{remark}\label{rem:sign}
For each $g\in\G$, the lemma applies after a possible change of sign. Over $\R$, choose $\varepsilon\in\{1,-1\}$ with $\tr(\varepsilon g)\ge0$; over $\C$, choose $\tr(\varepsilon g)\ne-2$. Both choices of $\mu$ give the same $\pi(R(g))$. This also applies to elliptic and parabolic elements.
\end{remark}

\begin{proposition}\label{prop:arith}
If $\overline{T(\Gamma)-T(\Gamma)}\ne\F$, then $\Gamma$ is arithmetic.
\end{proposition}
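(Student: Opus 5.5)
The plan is to prove that $\Comm_G(\Gamma)$ is not discrete and then invoke Margulis's commensurator criterion: a lattice in $\PSL_2(\F)$ is arithmetic if and only if its commensurator is dense, equivalently not discrete, since in that case it has infinite index over $\Gamma$. By Lemma~\ref{lem:root} and Remark~\ref{rem:sign}, every element $\pi(R(g))$ with $g\in\G$ lies in $\Comm_G(\Gamma)$. It therefore suffices to produce a sequence of such square roots, multiplied by elements of $\Gamma$, that converges to the identity without ever equalling it.

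First fix a loxodromic (hyperbolic, if $\F=\R$) element $h\in\G$. After conjugation, $h=\operatorname{diag}(\lambda,\lambda^{-1})$ with $|\lambda|>1$, and we may arrange $\tr h\ne-2$ by replacing $h$ with $-h$. Put $g_n=h^{2n+1}$ and choose signs and square roots $\mu_n$ with $\mu_n^2=\tr g_n+2$. Then $R(g_n)$ is a square root of $h^{2n+1}$ commuting with $h$, namely the diagonal matrix $\pm\operatorname{diag}(\lambda^{n+1/2},\lambda^{-n-1/2})$ for suitable branches. Consequently
$$
R(g_n)h^{-n}=\pm\operatorname{diag}(\lambda^{1/2},\lambda^{-1/2})=\pm R(h),
$$
which is constant and does not converge to the identity. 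So the sequence must be perturbed so that the square-root half-integer exponents produce genuinely new commensurating elements.

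The better choice uses $g_n=h^n u$ or $g_n=h^n\gamma h^{-n}\gamma'$ with a fixed non-commuting $\gamma\in\G$. Direct computation from $R(g)=(I+g)/\mu$ with $|\mu|\sim|\tr g|^{1/2}$ gives
$$
R(h^{2n}\gamma)\,(\text{suitable normalizing element of }\Gamma)\to R_\infty,
$$
where $R_\infty=\lim (I+h^{2n}\gamma)/\mu_n$. Since $h^{2n}\gamma/\lambda^{2n}$ tends to a rank-one matrix $E$, the normalized square roots behave like $\lambda^{n}E/\sqrt{\tr(\lambda^{2n}E)}$ plus a correction of order $\lambda^{-n}$. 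Multiplying by $h^{-n}$ on the appropriate side yields elements $c_n=R(h^{2n}\gamma)h^{-n}\in\Comm_G(\Gamma)$ which converge to a limit $c_\infty\in\SL_2(\F)$ while $c_n\ne c_\infty$ for all large $n$, because the correction terms are nonzero and of size $|\lambda|^{-2n}$. Then $c_{n}^{-1}c_{n+1}$ are nontrivial elements of $\Comm_G(\Gamma)$ tending to $I$ modulo $\pm I$, so the commensurator is not discrete. Here one needs $\gamma$ whose off-diagonal entries are nonzero, which holds for some $\gamma\in\G$ since $\Gamma$ is Zariski dense.

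The main obstacle is this asymptotic computation. Two points need care. One must check that the branch choices of $\mu_n$ can be made consistently, which is harmless since we work in $G$ and can pass to a subsequence. One must also verify that the differences $c_n^{-1}c_{n+1}$ are genuinely nontrivial in $\PSL_2(\F)$. I would expand $c_n$ explicitly as $c_\infty+\lambda^{-2n}M+O(|\lambda|^{-4n})$ and show that $M$ is not a scalar multiple of $c_\infty$ using the nonvanishing of the entries of $\gamma$. After that, Margulis's theorem gives arithmeticity.
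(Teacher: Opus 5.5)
Your strategy is the paper's: take square roots $R(g_n)$ of elements built from a loxodromic $h=\operatorname{diag}(\lambda,\lambda^{-1})$ and a fixed $\gamma=\begin{pmatrix}a&b\\c&d\end{pmatrix}\in\G$, rescale by $h^{-n}$ to get a convergent sequence of distinct elements of the commensurator, and invoke Margulis. But the step you call the main obstacle, and leave undone, fails as written. For $g_n=h^{2n}\gamma$ one has $\tr g_n+2=\lambda^{2n}\alpha_n^2$ with $\alpha_n^2=a+2\lambda^{-2n}+d\lambda^{-4n}$. Taking $\mu_n=\lambda^n\alpha_n$ gives
\[
R(h^{2n}\gamma)\,h^{-n}=\frac1{\alpha_n}\begin{pmatrix}a+\lambda^{-2n}&\lambda^{2n}b\\ \lambda^{-4n}c&1+\lambda^{-2n}d\end{pmatrix}.
\]
Its upper-right entry diverges whenever $b\ne0$, which is exactly the case you insist on. The leading term of $R(h^{2n}\gamma)$ is $\lambda^{n}E/\sqrt a$. (Your expression $\lambda^nE/\sqrt{\tr(\lambda^{2n}E)}$ is off by $\lambda^n$.) Since $E$ is supported in the first row, it is the rows that must be rescaled, so you need to multiply on the left:
\[
h^{-n}R(h^{2n}\gamma)=\frac{h^{-n}+h^{n}\gamma}{\mu_n}=\frac1{\alpha_n}\begin{pmatrix}a+\lambda^{-2n}&b\\ \lambda^{-2n}c&1+\lambda^{-2n}d\end{pmatrix}\longrightarrow\begin{pmatrix}\alpha&b/\alpha\\ 0&\alpha^{-1}\end{pmatrix}.
\]
Since $h^{-n}R(h^{2n}\gamma)=R(h^{n}\gamma h^{n})\,h^{-n}$, this is precisely the paper's sequence $S_n$ with $g_n=h^nMh^n$.

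The hypotheses on $\gamma$ also need correcting. What matters is $a\ne0$, so that $\alpha_n\to\alpha\ne0$; if $a=0$, then $\tr g_n+2$ stays bounded and $h^{-n}R(g_n)$ diverges. You also need $c\ne0$, while the entry $b$ plays no role. Over $\R$ you must further take $\lambda>1$ and replace $\gamma$ by $-\gamma$ if necessary so that $a>0$; then $\mu_n$ is real and Lemma~\ref{lem:root} applies. With these corrections, no second-order expansion is needed for distinctness. The lower-left entries $\lambda^{-2n}c/\alpha_n$ are nonzero and tend to $0$, and consecutive ones have ratio tending to $\lambda^{-2}\ne\pm1$. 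So infinitely many of the $S_n$ are distinct even modulo $\pm I$, which already shows that $\Comm_G(\Gamma)$ is not discrete. Margulis's theorem then finishes the proof, as you intended.
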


\begin{proof}
A lattice $\Gamma$ in $G$ is Zariski dense by Borel density \cite[Proposition 4.7.1]{Morris}, and contains a loxodromic element. After conjugation, choose a lift
$$
h=\begin{pmatrix}\lambda&0\\0&\lambda^{-1}\end{pmatrix}\in\G,
\qquad |\lambda|>1.
$$
In the real case choose the lift with $\lambda>1$. Zariski density also implies that there is an element
$$
M=\begin{pmatrix}a&b\\c&d\end{pmatrix}\in\G
\quad\text{with }ac\ne0.
$$
Over $\R$, replace $M$ by $-M$ if necessary so that $a>0$. Put $u_n=\lambda^{-2n}$ and
$$
g_n=h^nMh^n,\qquad
\tr g_n+2=\lambda^{2n}(a+2u_n+d u_n^2).
$$
Fix a square root $\alpha\in\F$ of $a$, positive in the real case. A local branch of the square root near $a\ne0$ gives, for all sufficiently large $n$, elements $\alpha_n$ satisfying
$$
\alpha_n^2=a+2u_n+d u_n^2,\qquad \alpha_n\longrightarrow\alpha.
$$
Thus $\mu_n=\lambda^n\alpha_n$ is a nonzero square root of $\tr g_n+2$, and Lemma~\ref{lem:root} gives $\pi(R(g_n))\in\Comm_G(\Gamma)$. Multiplying by $h^{-n}$, we obtain
\begin{equation}\label{eq:sequence}
S_n=R(g_n)h^{-n}
=\frac1{\alpha_n}
\begin{pmatrix}
a+u_n&b\\
cu_n&1+du_n
\end{pmatrix}
\longrightarrow
S=\begin{pmatrix}\alpha&b/\alpha\\0&\alpha^{-1}\end{pmatrix}.
\end{equation}
Every $S_n$ belongs to the preimage of $\Comm_G(\Gamma)$ in $\SL_2(\F)$, and infinitely many of these matrices are distinct, since their lower-left entries are nonzero and tend to zero. A discrete subgroup of a Lie group is closed and cannot contain a convergent sequence of infinitely many distinct elements. Hence the commensurator is nondiscrete. Margulis's commensurator rigidity theorem now gives arithmeticity, see \cite[\S1, p.~4]{Borel} and \cite[Chapter IX]{Margulis}. 
\end{proof}

\begin{corollary}\label{cor:dense}
If $\Gamma$ is nonarithmetic, choose $h,M,g_n$ as in the proof of Proposition~\ref{prop:arith}. For every sufficiently large $n$, the set
$$
\{\tr(\gamma g_n)+\tr\gamma \mid \gamma\in\G\}
$$
is dense in $\F$. In particular, $T(\Gamma)-T(\Gamma)$ is dense, and for every $\eta>0$ there are distinct $s,t\in T(\Gamma)$ with $|s-t|<\eta$.
\end{corollary}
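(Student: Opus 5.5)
The plan is to argue by contraposition from the proof of Proposition~\ref{prop:arith}. Keep the choices of $h$, $M$, $\alpha_n$, $\mu_n=\lambda^n\alpha_n$ and $g_n=h^nMh^n$ made there. For all sufficiently large $n$, $\mu_n$ is a nonzero square root of $\tr g_n+2$, so Lemma~\ref{lem:root} applies to $g_n$ and $R(g_n)=(I+g_n)/\mu_n\in\SL_2(\F)$ is defined.

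First I claim that $\pi(R(g_n))\notin\Comm_G(\Gamma)$ for all large $n$. Suppose instead that $\pi(R(g_n))\in\Comm_G(\Gamma)$ for infinitely many $n$. Along that subsequence, $S_n=R(g_n)h^{-n}$ lies in the preimage of $\Comm_G(\Gamma)$. By \eqref{eq:sequence}, these $S_n$ are pairwise distinct (their lower-left entries $cu_n/\alpha_n$ are nonzero and tend to zero) and converge to $S$. Hence $\Comm_G(\Gamma)$ is nondiscrete, and Margulis's commensurator rigidity theorem forces $\Gamma$ to be arithmetic, which contradicts the hypothesis. So there is an $N$ such that $\pi(R(g_n))$ does not commensurate $\Gamma$ for every $n\ge N$. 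This subsequence step is the only point that needs care: the conclusion is "for all large $n$", not merely "for some $n$", so the argument has to rule out infinitely many exceptions, and the accumulation argument on the subsequence does exactly that.

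Next, fix $n\ge N$ and apply Corollary~\ref{cor:twisted} with $\Lambda=\G$ and $A=R(g_n)$. By the remark after that corollary, $A$ fails to commensurate $\G$. Therefore $\{\tr(\gamma R(g_n))\mid\gamma\in\G\}$ is dense in $\F$. Identity \eqref{eq:roottrace} gives $\tr(\gamma g_n)+\tr\gamma=\mu_n\tr(\gamma R(g_n))$, and multiplication by the nonzero constant $\mu_n$ is a homeomorphism of $\F$. So the set in the statement is dense.

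Finally, since $\tr\gamma=-\tr(-\gamma)$ and $-\gamma\in\G$, each element $\tr(\gamma g_n)+\tr\gamma=\tr(\gamma g_n)-\tr(-\gamma)$ lies in $T(\Gamma)-T(\Gamma)$, so the difference set is dense. Given $\eta>0$, density produces some element of $T(\Gamma)-T(\Gamma)$ in the punctured disc $\{0<|z|<\eta\}$, which is a nonempty open set. Such an element has the form $s-t$ with $s,t\in T(\Gamma)$, $s\ne t$ and $|s-t|<\eta$.
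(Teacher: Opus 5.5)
Your proposal is correct and follows the paper's argument. Nonarithmeticity makes $\Comm_G(\Gamma)$ discrete by Margulis, so only finitely many of the convergent matrices $S_n=R(g_n)h^{-n}$ can lie in its preimage. Since $h\in\G$, this puts $\pi(R(g_n))$ outside the commensurator for all large $n$, and Corollary~\ref{cor:twisted} with \eqref{eq:roottrace} finishes the proof.

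One small imprecision: you call the $S_n$ pairwise distinct, which ``nonzero lower-left entries tending to zero'' does not literally give. It does give infinitely many distinct values, and that is all your accumulation argument needs.
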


\begin{proof}
The construction of \eqref{eq:sequence} does not require any condition on the trace set. Since $\Gamma$ is nonarithmetic, its commensurator is discrete. The matrices $S_n$ converge to $S$ and are different from $S$ for every $n$. Therefore only finitely many of them belong to the preimage of the commensurator. As $h$ belongs to $\G$, it follows that $\pi(R(g_n))$ is outside the commensurator for every sufficiently large $n$. Then it remains to apply Corollary~\ref{cor:twisted} and the identity \eqref{eq:roottrace}.
\end{proof}

\subsection{The invariant quaternion algebra}\label{subsec:containment}

We first work with the invariant algebra $B_\Gamma/k_\Gamma$ from Section~\ref{subsec:vinberg}, without assuming arithmeticity. The following calculation is Vinberg's description of its normalizer \cite[p.~186, equations (19)--(20)]{Vinberg}. We include the proof to keep track of determinant-one representatives.

\begin{lemma}\label{lem:rational}
Let $k=k_\Gamma$ and $B=B_\Gamma$. If $A\in\SL_2(\F)$ and $\pi(A)\in\Comm_G(\Gamma)$, then
$$
A=s b\quad\text{for some }s\in\F^\times,\ b\in B^\times.
$$
In particular,
\begin{equation}\label{eq:square}
A^2=\frac{b^2}{\Nrd(b)}\in B^1.
\end{equation}
\end{lemma}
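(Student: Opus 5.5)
Conjugation by $A$ normalizes $B$, and Skolem--Noether turns that into the decomposition $A=sb$. The proof has three steps.

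\emph{Step 1: $ABA^{-1}=B$.} Put $\Gamma_0=\Gamma\cap \pi(A)^{-1}\Gamma\pi(A)$. By hypothesis it has finite index in $\Gamma$, so it is again a lattice. Since $k_\Gamma$ and $B_\Gamma$ are commensurability invariants (Section~\ref{subsec:vinberg}), we have $k_{\Gamma_0}=k$ and $B_{\Gamma_0}=B$. Conjugation by $A$ maps $\Gamma_0$ into $\Gamma$. Its image $A\Gamma_0A^{-1}$ is a lattice commensurable with $\Gamma$, because $\pi(A)$ commensurates $\Gamma$. It therefore has invariant algebra $B$ again. On the other hand, conjugation sends squares to squares and preserves traces, so
$$
A\,\widetilde{\Gamma_0^{(2)}}\,A^{-1}=\widetilde{(A\Gamma_0A^{-1})^{(2)}} .
$$
Taking $k$-spans in \eqref{eq:invariantalgebra} gives $ABA^{-1}=B$.

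\emph{Step 2: Skolem--Noether.} By Step 1, $x\mapsto AxA^{-1}$ is a $k$-algebra automorphism of $B$; it fixes $k$ because the elements of $k$ are scalars. By Skolem--Noether there is $b\in B^\times$ with $AxA^{-1}=bxb^{-1}$ for all $x\in B$. Then $b^{-1}A$ commutes with $B$. Since $B\otimes_k\F\cong M_2(\F)$, the set $B$ spans $M_2(\F)$ over $\F$, so $b^{-1}A$ commutes with all of $M_2(\F)$ and is a scalar $s\in\F^\times$. This gives $A=sb$.

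\emph{Step 3: the square.} Taking determinants, $1=\det A=s^2\Nrd(b)$, so $s^2=\Nrd(b)^{-1}$ and $A^2=s^2b^2=b^2/\Nrd(b)$. The right-hand side lies in $B$, since $\Nrd(b)\in k^\times$, and has reduced norm $\Nrd(b)^2/\Nrd(b)^2=1$. Hence $A^2\in B^1$, which is \eqref{eq:square}.

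The main obstacle is Step 1, specifically justifying that the commensurate lattice $A\Gamma_0A^{-1}$ has the same invariant algebra $B$, and not merely an isomorphic one, as a subset of $M_2(\F)$. If that cannot simply be cited, I would argue directly. For finite-index $\Gamma'\le\Gamma$, the $k$-span of $\widetilde{\Gamma'^{(2)}}$ is a $k$-subalgebra of $B$, and it is a quaternion algebra over $k$ by the cited result of Vinberg applied to $\Gamma'$. Comparing $k$-dimensions then forces equality with $B$ as a subset. This is the standard Maclachlan--Reid argument and should suffice for both $\Gamma_0$ and its conjugate.
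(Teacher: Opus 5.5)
Your proof is correct and follows the paper's argument essentially step for step. The paper also passes to the finite-index subgroups $H=\Gamma\cap\pi(A)^{-1}\Gamma\pi(A)$ and $\pi(A)H\pi(A)^{-1}$ of $\Gamma$ and notes that both have invariant algebra $B$, so that conjugation by $A$ is a $k$-automorphism of $B$; it then applies Skolem--Noether, the centralizer argument and the determinant computation just as you do. Your dimension-count argument that a finite-index subgroup of $\Gamma$ has invariant algebra equal to $B$ as a subset of $M_2(\F)$ (not merely isomorphic to it) fills in a step the paper leaves implicit.
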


\begin{proof}
The subgroups
$$
H=\Gamma\cap \pi(A)^{-1}\Gamma \pi(A) \quad\text{and}\quad \pi(A) H \pi(A)^{-1}
$$
both have finite index in $\Gamma$. Their invariant quaternion algebras are therefore both equal to $B$. Conjugation by $A$ takes the quaternion algebra of $H$ to that of $\pi(A) H \pi(A)^{-1}$ and fixes scalar matrices, so it defines a $k$-algebra automorphism of $B$. By the Skolem--Noether theorem, this automorphism is conjugation by some $b\in B^\times$. Thus $b^{-1}A$ centralizes $B$, and hence $B\otimes_k\F=M_2(\F)$, so it is a scalar $s\in\F^\times$.

Finally, $\det A=1$ gives $s^2\Nrd(b)=1$. Taking square of $A=sb$ proves \eqref{eq:square}, and the reduced norm of $b^2/\Nrd(b)$ is one.
\end{proof}

\begin{proposition}\label{prop:tracefield}
If $\overline{T(\Gamma)-T(\Gamma)}\ne\F$, then
$$
\G\subset B_\Gamma^1
\qquad\text{and}\qquad L_\Gamma=k_\Gamma.
$$
\end{proposition}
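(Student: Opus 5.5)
We need to show that every $g\in\G$ lies in $B_\Gamma^1$; the equality $L_\Gamma=k_\Gamma$ then follows at once. Indeed, $\tr g=\Trd(g)\in k_\Gamma$ for $g\in B_\Gamma$, which gives $L_\Gamma\subset k_\Gamma$, and the reverse inclusion holds by definition. The plan is to combine Lemma~\ref{lem:root} and Lemma~\ref{lem:rational}. Fix $g\in\G$ and, following Remark~\ref{rem:sign}, replace $g$ by $\varepsilon g$ with $\varepsilon=\pm1$ so that $\tr(\varepsilon g)+2\ne0$. This is harmless, because $-I\in\G$ lies in $B_\Gamma^1$ (the algebra $B_\Gamma$ contains $k_\Gamma$), so proving the claim for $\varepsilon g$ proves it for $g$. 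Choose $\mu$ with $\mu^2=\tr g+2$. By Lemma~\ref{lem:root} and the non-density hypothesis, $\pi(R(g))\in\Comm_G(\Gamma)$, and $R(g)\in\SL_2(\F)$. Now apply Lemma~\ref{lem:rational} with $A=R(g)$. Equation~\eqref{eq:square} gives $g=R(g)^2=b^2/\Nrd(b)\in B_\Gamma^1$.

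The crucial point is that Lemma~\ref{lem:rational} controls only the \emph{square} of a commensurating element. This is why the square root $R(g)$ is used: it turns membership of $g$ itself into a statement about $A^2$. So the argument is essentially a two-line composition, and no separate computation of traces is needed. I would still sanity-check each hypothesis. First, Lemma~\ref{lem:root} needs $\mu\ne0$, which is exactly where the sign change is used. In the real case, choosing $\tr(\varepsilon g)\ge0$ gives $\mu$ real and positive, so $R(g)\in\SL_2(\R)$. Over $\C$, any square root of $\tr g+2$ works. Second, Lemma~\ref{lem:rational} requires $A\in\SL_2(\F)$ with $\pi(A)$ in the commensurator, and both conditions are supplied.

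The only possible subtlety, and what I expect to be the main obstacle, is the bookkeeping in Lemma~\ref{lem:rational}. One must check that the invariant quaternion algebra of a finite-index subgroup $H$ coincides with $B_\Gamma$. The lemma asserts this, and it holds because $B_\Gamma$ is a commensurability invariant (Vinberg). One must also check that the scalar $s$ drops out when squaring: $A^2=s^2b^2$ with $s^2=\Nrd(b)^{-1}\in k_\Gamma$. Both points are already contained in the stated lemma. Given that, the proposition follows for all $g$, including elliptic and parabolic elements, as noted in Remark~\ref{rem:sign}.
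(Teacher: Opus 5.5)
Your proposal is correct and follows the paper's proof essentially step for step. You normalize the sign via Remark~\ref{rem:sign} and use Lemma~\ref{lem:root} to place $\pi(R(\varepsilon g))$ in the commensurator. Lemma~\ref{lem:rational} then gives $\varepsilon g=R(\varepsilon g)^2\in B_\Gamma^1$, and $-I\in B_\Gamma^1$ together with $\tr=\Trd$ yields $L_\Gamma=k_\Gamma$.
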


\begin{proof}
For $g\in\G$, choose $\varepsilon\in\{1,-1\}$ as in Remark~\ref{rem:sign}, so that $R(\varepsilon g)$ is defined. Its image $\pi(R(\varepsilon g))$ belongs to the commensurator by Lemma \ref{lem:root}, and Lemma~\ref{lem:rational} therefore gives
$$
\varepsilon g=R(\varepsilon g)^2\in B_\Gamma^1.
$$
Since $-I\in B_\Gamma^1$, this proves the first assertion of the current lemma. The ordinary trace of each $g\in\G$ is now its reduced trace in $B_\Gamma$, and hence belongs to $k_\Gamma$. Thus $L_\Gamma\subset k_\Gamma$, while the reverse inclusion follows from \eqref{eq:invariantfield}.
\end{proof}

\subsection{Proof of Theorem \ref{thm:main}}

\begin{proposition}\label{prop:order}
Suppose that $B/k$ is an admissible quaternion algebra, that $\OO_0\subset B$ is an order, that $\Gamma$ is commensurable with $P\OO_0^1$, and that $\G\subset B^1$. Then there is an order $\OO\subset B$ with $\G\subset\OO^1$. Consequently $\Gamma$ is derived from $B$.
\end{proposition}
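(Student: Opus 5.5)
Since $\Gamma$ is commensurable with $P\OO_0^1$, the group $\Delta=\G\cap\OO_0^1$ has finite index in $\G$. The natural candidate is the $\OO_k$-span
$$
\OO=\OO_k[\G]=\Bigl\{\textstyle\sum_i c_i\gamma_i \;\Bigm|\; c_i\in\OO_k,\ \gamma_i\in\G\Bigr\}\subset B,
$$
using $\G\subset B^1\subset B$. By construction $\OO$ is a subring containing $1$ and stable under multiplication by $\G$. It spans $B$ over $k$ because it contains $\OO_k[\Delta]$, and a finite-index subgroup of $P\OO_0^1$ is Zariski dense, so its $k$-span is $B$. Once we know $\OO$ is an order, we get $\G\subset\OO\cap B^1=\OO^1$, and $\Gamma=\pi(\G)$ has finite index in $P\OO^1$: both are lattices, or equivalently $P\OO^1$ is commensurable with $P\OO_0^1$ and hence with $\Gamma$. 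This is exactly the definition of being derived from $B$.

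The only real issue is that $\OO$ is finitely generated as an $\OO_k$-module. I would use the standard argument. Choose coset representatives $\gamma_1,\dots,\gamma_m$ with $\G=\bigcup_j\Delta\gamma_j$. Then
$$
\OO=\sum_j \OO_k[\Delta]\gamma_j\subset\sum_j\OO_0\gamma_j,
$$
since $\OO_k[\Delta]\subset\OO_0$. The right-hand side is a finitely generated $\OO_k$-module, being a finite sum of translates of $\OO_0$. Because $\OO_k$ is Noetherian, the submodule $\OO$ is finitely generated too. Since $\OO$ is torsion-free and spans $B$, it is a lattice in $B$ and hence an order.

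The same argument works even when $\Delta$ is not normal in $\G$, because only the inclusion $\G\subset\bigcup_j\Delta\gamma_j$ was used. An alternative route would be to show that every $\gamma\in\G$ is integral, since some power $\gamma^N$ lies in $\OO_0^1$. Integrality of elements alone does not give an order, however, so the span argument is the one to use. The main step to verify carefully is the finite generation; beyond that there is no real obstacle.
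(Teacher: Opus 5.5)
Your proof is correct, but it handles the finite-generation step differently from the paper. The paper first shows that every trace of $\G$ lies in $\OO_k$, using $g^m\in\OO_0^1$ and the monic polynomials $p_m$ with $\tr(g^m)=p_m(\tr g)$. It then applies Vinberg's trace-pairing construction: for $\gamma_1,\dots,\gamma_4\in\G$ forming a $k$-basis of $B$, the map $x\mapsto(\Trd(x\gamma_1),\dots,\Trd(x\gamma_4))$ is injective and sends $\operatorname{span}_{\OO_k}\G$ into $\OO_k^4$, because $\G$ is closed under multiplication. Noetherianity of $\OO_k$ finishes the argument.

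You instead bound $\OO$ directly by $\sum_j\OO_0\gamma_j$, using coset representatives for $\Delta=\G\cap\OO_0^1$. This is shorter and avoids the integrality computation. It relies essentially on the reference order $\OO_0$ and on the hypothesis $\G\subset B$, which guarantees $\OO_0\gamma_j\subset B$.

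The paper's route shows that integrality of traces is the only input needed. It therefore proves the more intrinsic fact that any subgroup of $B^1$ spanning $B$ with all traces in $\OO_k$ lies in an order. The same mechanism is reused, via the Gram matrix and Cramer's rule, in the proof of Theorem~\ref{thm:sld}.

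Your aside that integrality alone does not give an order is true only for arbitrary sets of integral elements. For a group spanning $B$, integrality of traces does suffice, precisely by this trace-dual argument. So the alternative you dismissed is exactly the route the paper takes.
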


\begin{proof}
For every $g\in\G$, some positive power $g^m$ belongs to $\OO_0^1$, since $\G\cap\OO_0^1$ has finite index in $\G$. Define polynomials in $\Z[X]$ by
$$
p_0=2,\qquad p_1(X)=X,\qquad p_{m+1}(X) = X p_m(X)-p_{m-1}(X).
$$
From the Cayley--Hamilton theorem, we have $g^{m+1} = (\tr g) g^m - g^{m-1}$. Taking traces and using induction yields $\tr(g^m)=p_m(\tr g)$, where every $p_m$ is monic for $m\ge1$. As $\tr g=\Trd(g)\in k$ and $\tr(g^m)\in\OO_k$, the monic polynomial
$$
p_m(X)-\tr(g^m)\in\OO_k[X]
$$
vanishes at $\tr g$. Hence $\tr g$ is integral over $\OO_k$ and belongs to $\OO_k$.

We now use Vinberg's order construction \cite[p.~188, equation (27)]{Vinberg}. Zariski density lets us choose $\gamma_1,\ldots,\gamma_4\in\G$ which are linearly independent over $\F$, and hence form a $k$-basis of $B$. The reduced trace pairing gives a $k$-linear isomorphism
$$
\Phi:B\longrightarrow k^4,\qquad
x\longmapsto(\Trd(x\gamma_1),\ldots,\Trd(x\gamma_4)).
$$
Since all traces of elements of $\G$ are in $\OO_k$, the module $\OO=\operatorname{span}_{\OO_k}\G$ is mapped by $\Phi$ into $\OO_k^4$. Since $\OO_k$ is Noetherian, this inclusion implies that $\OO$ is finitely generated. Moreover, $\OO$ spans $B$ over $k$, contains $1$, and is closed under multiplication because $\G$ is a group. It is therefore an order with $\G\subset\OO^1$, since $\Nrd(g) = \det g = 1$. Admissibility makes $P\OO^1$ a lattice, hence $\Gamma\subset P\OO^1$ has finite index.
\end{proof}

\begin{proof}[Proof of Theorem~\ref{thm:main}]
Condition (2) implies (3), since a positive gap makes $T(\Gamma)-T(\Gamma)$ disjoint from a punctured neighborhood of zero.

Assume (3). By Proposition~\ref{prop:arith}, the lattice $\Gamma$ is arithmetic. Thus its invariant pair $k=k_\Gamma$, $B=B_\Gamma$ is admissible, and $\Gamma$ is commensurable with $P\OO_0^1$ for an order $\OO_0\subset B$. Proposition~\ref{prop:tracefield} gives $\G\subset B^1$, and Proposition~\ref{prop:order} proves (1).

Finally, assume (1), and use the admissible field $k$ and order $\OO$ in that condition. Every trace lies in $\OO_k$. At any ramified real place $\sigma$ the group of norm-one quaternions is compact, and its reduced traces lie in $[-2,2]$. Thus, for distinct $s,t\in T(\Gamma)$ and $v=s-t$, one has $|\sigma(v)|\le4$ at every real place other than the distinguished place, if there is one.

Put $n=[k:\Q]$. In the real case the field $k$ is totally real, so
$$
1\le|N_{k/\Q}(v)|\le |v|\,4^{n-1}.
$$
In the complex case the distinguished pair of complex embeddings contributes $|v|^2$, and there are $n-2$ real embeddings. Therefore
$$
1\le|N_{k/\Q}(v)|\le |v|^2\,4^{n-2}.
$$
The left inequalities use that $v$ is a nonzero algebraic integer. These estimates show that every nonzero difference $v=s-t$ satisfies $|v|\ge4^{1-n}$ in the real case and $|v|\ge2^{2-n}$ in the complex case. This proves (2).
\end{proof}

\begin{remark}
Under the hypothesis of Proposition~\ref{prop:tracefield}, the relation with Vinberg's adjoint trace field can also be seen directly. If $R=R(\varepsilon g)$, then
$$
\tr\Ad(R)=(\tr R)^2-1=\varepsilon\tr g+1.
$$
Conjugation by an element of the commensurator acts $k_\Gamma$-linearly on the three-dimensional space $B_\Gamma^0=\ker\Trd$, so its adjoint trace belongs to $k_\Gamma$. The above identity therefore gives $\tr g=\varepsilon\bigl(\tr\Ad(R)-1\bigr)\in k_\Gamma$. Neither this observation nor Proposition~\ref{prop:tracefield} by itself proves that $k_\Gamma$ is a number field; arithmeticity is obtained by Proposition~\ref{prop:arith}.
\end{remark}

\begin{remark}\label{rem:lifts}
Let $b\in B^\times$. There exists $c\in k^\times$ such that $cb\in B^1$ if and only if $\Nrd(b)$ is a square in $k^\times$. Thus a matrix in $B^\times$ cannot always be multiplied by a scalar in $k$ to obtain determinant one.

In Lemma~\ref{lem:rational}, we write $A=sb$, where $A\in\SL_2(\F)$, $b\in B^\times$, and $s\in\F^\times$. The scalar $s$ need not belong to $k$, although the determinant condition gives $s^2=\Nrd(b)^{-1}\in k$. Nevertheless, $A^2=\frac{b^2}{\Nrd(b)}\in B^1.$ A related distinction appears in \cite[\S3.1.2 and Lemma 3.10]{BBKS}: if $Q$ is the matrix of a quadratic form over $k$, a matrix $M$ over $k$ may satisfy $M^{\mathsf T}QM=\mu Q$ for some $\mu\in k^\times$, while the normalized matrix $\mu^{-1/2}M$, which preserves the form, may only be defined over $k(\sqrt{\mu})$. Compare it also with the discussion of arithmetic lattices associated to skew-Hermitian forms in \cite[Remark 4.7]{BBKS}.
\end{remark}

\begin{remark}
The complex assertion concerns traces, not their absolute values. For example, the full trace set of $\PSL_2(\Z[i])$ is $\Z[i]$, so its complex gap is one. Its absolute trace values include $n$ and $\sqrt{n^2+1}$ for positive integers $n$, whose differences tend to zero.
\end{remark}

\section{Lattices in \texorpdfstring{$\SL_d(\R)$}{SLd(R)}}\label{sec:sld}

Throughout this section, $d\ge3$, $\Lambda<\SL_d(\R)$ is a lattice, and $T_\Lambda$ is its trace set. We take commensurators in $\GL_d(\R)$. This lets us use $g-I$ directly, regardless of the sign of its determinant.

\begin{lemma}\label{lem:minusone}
Suppose that $\overline{T_\Lambda-T_\Lambda}\ne\R$. If $g\in\Lambda$ and $\det(g-I)\ne0$, then $g-I$ commensurates $\Lambda$.
\end{lemma}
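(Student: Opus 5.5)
We argue by contradiction, exactly as in the last part of Lemma~\ref{lem:root}, with $A=g-I$ in place of $R(g)$. Put $A=g-I$. By hypothesis $A\in\GL_d(\R)$. Suppose $A$ does not commensurate $\Lambda$. Corollary~\ref{cor:twisted} applies to any $A\in\GL_d(\F)$, not only to elements of $\SL_d$, so the set $\{\tr(\gamma A)\mid\gamma\in\Lambda\}$ would be dense in $\R$.

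On the other hand, linearity of the trace gives, for every $\gamma\in\Lambda$,
$$
\tr(\gamma A)=\tr(\gamma g)-\tr\gamma .
$$
Since $\gamma g\in\Lambda$ and $\gamma\in\Lambda$, the right-hand side lies in $T_\Lambda-T_\Lambda$. So $\{\tr(\gamma A)\mid\gamma\in\Lambda\}\subset T_\Lambda-T_\Lambda$. If the left set were dense, the difference set would be dense as well, contradicting $\overline{T_\Lambda-T_\Lambda}\ne\R$. Hence $A$ commensurates $\Lambda$.

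The only point needing care is the use of Corollary~\ref{cor:twisted} for a matrix that is not in $\SL_d(\R)$, and possibly one with negative determinant. The commensurator is taken in $\GL_d(\R)$, so "$A$ commensurates $\Lambda$" means that $A\Lambda A^{-1}$ and $\Lambda$ are commensurable. Conjugation preserves $\SL_d(\R)$, so $A\Lambda A^{-1}$ is again a lattice there, and Lemma~\ref{lem:double} applies to the pair $\Lambda$ and $A\Lambda A^{-1}$ as in the proof of the corollary.

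The trace surjectivity step also works for any nonzero value of $\det A$: the matrix with blocks $\begin{pmatrix}z-d+2&-\det A\\1&0\end{pmatrix}\oplus I_{d-2}$ has trace $z$ and determinant $\det A$. So no sign normalization (as in Remark~\ref{rem:sign}) is needed. This is precisely why the section works in $\GL_d(\R)$.
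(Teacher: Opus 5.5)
Your proposal is correct and is essentially the paper's proof: the identity $\tr(\gamma(g-I))=\tr(\gamma g)-\tr\gamma\in T_\Lambda-T_\Lambda$ combined with Corollary~\ref{cor:twisted}, which is already stated for arbitrary $A\in\GL_d(\F)$. Your extra remarks about $\det(g-I)$ possibly being negative are accurate, but they are already covered by the proof of that corollary.
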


\begin{proof}
For every $\gamma\in\Lambda$,
$$
\tr(\gamma(g-I))=\tr(\gamma g)-\tr\gamma\in T_\Lambda-T_\Lambda.
$$
If $g-I$ did not commensurate $\Lambda$, Corollary~\ref{cor:twisted} would make these differences dense in $\R$.
\end{proof}

We will use the following version of Lemma~\ref{lem:rational}.

\begin{lemma}\label{lem:csarational}
Let $F\subset\R$ be a number field and let $E/F$ be a central simple algebra of degree $d$, embedded in $M_d(\R)$ so that $E\otimes_F\R=M_d(\R)$. Let $\Lambda_0<E^\times\cap\SL_d(\R)$ be Zariski dense. If $A\in\GL_d(\R)$ commensurates $\Lambda_0$, then $A=sb$ for some $s\in\R^\times$ and $b\in E^\times$.

Suppose also that $E$ has an involution $*$ whose restriction to $F$ is the nontrivial automorphism of a quadratic extension $F/k$, and that $h^*h=1$ for every $h\in\Lambda_0$. Then $b^*b\in k^\times$.
\end{lemma}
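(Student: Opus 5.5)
We follow the proof of Lemma~\ref{lem:rational}, replacing the invariant quaternion algebra by the $F$-span of a finite-index subgroup. The first step is to check that for every finite-index subgroup $H<\Lambda_0$ one has $\operatorname{span}_F H=E$. Indeed, $H$ is still Zariski dense in $\SL_d(\R)$, because $\SL_d(\R)$ is Zariski connected and $\Lambda_0$ is a finite union of cosets of $H$. Hence the $\R$-span of $H$ is all of $M_d(\R)$, since otherwise $H$ would lie in a proper linear subspace, which is Zariski closed. Now $\operatorname{span}_F H\subset E$, and $\operatorname{span}_F H$ contains an $\R$-basis of $M_d(\R)$. Elements of $E$ that are $F$-linearly independent stay $\R$-linearly independent, because $E\otimes_F\R=M_d(\R)$. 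Therefore $\dim_F\operatorname{span}_F H\ge d^2=\dim_F E$, which gives equality.

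Next we apply this to $H=\Lambda_0\cap A^{-1}\Lambda_0A$ and to $AHA^{-1}=A\Lambda_0A^{-1}\cap\Lambda_0$. Both have finite index in $\Lambda_0$, so both span $E$ over $F$. Conjugation $x\mapsto AxA^{-1}$ is $\R$-linear and maps $H$ onto $AHA^{-1}$. It therefore maps $\operatorname{span}_F H=E$ onto $\operatorname{span}_F(AHA^{-1})=E$, and it restricts to a ring automorphism of $E$ that fixes $F$ pointwise, since $F$ consists of scalar matrices. By Skolem--Noether there is $b\in E^\times$ with $AxA^{-1}=bxb^{-1}$ for all $x\in E$. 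Then $b^{-1}A$ commutes with $E$, hence with its $\R$-span $M_d(\R)$, so $b^{-1}A=s$ is a nonzero real scalar. This gives $A=sb$.

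For the second assertion we use $h^*=h^{-1}$ for $h\in\Lambda_0$. For $h\in H$ put $h'=AhA^{-1}=bhb^{-1}\in\Lambda_0$. From $h'^*h'=1$ we get
$$(b^{-1})^*h^*b^*\,bhb^{-1}=1,$$
so $h^*(b^*b)h=b^*b$. Since $h^*=h^{-1}$, this says $h^{-1}(b^*b)h=b^*b$, so $b^*b$ commutes with every $h\in H$. As $H$ spans $E$ over $F$, $b^*b$ lies in the center $F$, and it is nonzero because $b$ is invertible. Moreover $(b^*b)^*=b^*b$, so $b^*b$ is fixed by the restriction of $*$ to $F$, which is the nontrivial automorphism of $F/k$. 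Hence $b^*b\in k^\times$.

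The main obstacle is the first step, namely that finite-index subgroups of $\Lambda_0$ still span $E$ over $F$, since the scalar-extension and linear-independence argument is what makes the Skolem--Noether step legitimate. A second point needs care: the conjugation by $A$ is a priori only $\R$-linear, and it is $F$-linear on $E$ only because $F$ consists of central scalars. The final step, that $b^*b$ is $*$-invariant, is immediate.
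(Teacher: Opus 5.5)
Your proof is correct and follows the paper's argument step for step: Zariski density of $H=\Lambda_0\cap A^{-1}\Lambda_0A$ gives an $F$-basis of $E$ inside $H$, Skolem--Noether produces $b$ with $b^{-1}A$ a real scalar, and unitarity gives $h^{-1}(b^*b)h=b^*b$, which forces $b^*b$ into the center $F$ and then, being $*$-fixed, into $k^\times$. There are two small differences, neither of which is a gap. You get surjectivity of conjugation on $E$ from the fact that $AHA^{-1}$ also spans $E$, whereas the paper uses only $AHA^{-1}\subset E$. Also, the linear-independence fact you actually need is the trivial one, that $\R$-independent elements are $F$-independent, not the converse you state.
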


\begin{proof}
The group $H=\Lambda_0\cap A^{-1}\Lambda_0A$ has finite index in $\Lambda_0$ and is Zariski dense. Its real span is $M_d(\R)$, so it contains $d^2$ real-linearly independent elements, which form an $F$-basis of $E$. Since $AHA^{-1}\subset E$, conjugation by $A$ preserves $E$ and fixes its center pointwise. The Skolem--Noether theorem gives $b\in E^\times$ inducing the same automorphism. The matrix $b^{-1}A$ centralizes $E$, and hence $M_d(\R)$, so it is a real scalar.

Under the additional hypotheses, $bhb^{-1}=AhA^{-1}$ is unitary for every $h\in H$. Expanding $(bhb^{-1})^*(bhb^{-1})=1$ and using $h^*=h^{-1}$ gives
$$
h^{-1}(b^*b)h=b^*b.
$$
Since $H$ spans $E$, the element $b^*b$ lies in the center $F$; it is also fixed by $*$, and therefore belongs to $k^\times$.
\end{proof}

As we discussed in Section~\ref{subsec:algebras}, the classification of forms of type $A_{d-1}$ gives two arithmetic constructions of lattices in $\SL_d(\R)$: reduced norm one in a central simple algebra, and unitary groups for involutions of the second kind. These are called the inner and outer cases, respectively. Margulis's arithmeticity theorem applies to a lattice in $\SL_d(\R)$ because $d \ge 3$; see \cite[Theorem 5.2.1]{MorrisAG}. 

In the inner case the defining field is $\Q$. To see this, let $k$ be that field and consider the degree-$d$ algebra at its archimedean places. At a complex place its norm-one group is $\SL_d(\C)$. At a real place the algebra is $M_d(\R)$ or, when $d$ is even, $M_{d/2}(\mathbb H)$. Both norm-one groups are noncompact: in the quaternionic case $d\ge4$, and the matrices $\operatorname{diag}(e^t,e^{-t},1,\ldots,1)$ already form an unbounded subgroup. The arithmetic construction has only one noncompact archimedean factor, namely the given $\SL_d(\R)$. Thus $k$ has exactly one archimedean place, which is real, and $k=\Q$.

In the outer case there is a quadratic extension $L/k$, a central simple algebra $E/L$ of degree $d$, and an involution $*$ inducing the nontrivial automorphism of $L/k$. At the distinguished real place $v$ of $k$, we have $L\otimes_{k,v}\R=\R\times\R$, and $E$ splits over both factors. Projection to one factor identifies the real unitary group with $\SL_d(\R)$ and embeds $E$ in $M_d(\R)$. After conjugation, a finite-index subgroup $\Lambda_0<\Lambda$ is contained in
$$
\{g\in E^\times \mid g^*g=1,\ \Nrd(g)=1\}.
$$
All other archimedean factors are compact.

\begin{proposition}\label{prop:inner}
If $\overline{T_\Lambda-T_\Lambda}\ne\R$, then $\Lambda$ belongs to an inner arithmetic commensurability class.
\end{proposition}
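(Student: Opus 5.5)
The plan is to assume, for contradiction, that $\Lambda$ lies in an outer commensurability class, and to use Lemma~\ref{lem:minusone} together with the unitary part of Lemma~\ref{lem:csarational}. By Margulis's arithmeticity theorem and the discussion above, $\Lambda$ is arithmetic, so it is either inner or outer. In the outer case, after conjugation, a finite-index subgroup $\Lambda_0<\Lambda$ lies in $\{g\in E^\times\mid g^*g=1,\ \Nrd(g)=1\}$. Here $E/L$ is central simple of degree $d$, $L/k$ is quadratic, and $E$ is embedded in $M_d(\R)$ via the distinguished factor, so that $E\otimes_L\R=M_d(\R)$. By Borel density, $\Lambda_0$ is Zariski dense in $\SL_d(\R)$. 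Commensurating $\Lambda$ is the same as commensurating $\Lambda_0$, so Lemma~\ref{lem:minusone} shows that $g-I$ commensurates $\Lambda_0$ for every $g\in\Lambda_0$ with $\det(g-I)\ne0$.

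Fix such a $g$. The element $g-I$ already lies in $E^\times$, since $g\in E$ and $\det(g-I)\neq 0$. Lemma~\ref{lem:csarational}, applied with $F=L$, writes $g-I=sb$ with $s\in\R^\times$ and $b\in E^\times$. In that proof $b$ is determined by the induced automorphism of $E$ up to the center $L^\times$, so we may take $b=g-I$ itself. (If one prefers not to revisit the proof, note that $b^{-1}(g-I)$ is a real scalar lying in $E$, hence in $L^\times$, and replacing $b$ by $cb$ with $c\in L^\times$ multiplies $b^*b$ by $c^*c\in k^\times$.) The second assertion of Lemma~\ref{lem:csarational} then gives
$$
(g-I)^*(g-I)=(g^{-1}-I)(g-I)=2I-g-g^{-1}\in k^\times ,
$$
using $g^*=g^{-1}$. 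Hence $g+g^{-1}$ is a scalar matrix $cI$. Equivalently, $g^2-cg+I=0$, so $g$ is annihilated by a quadratic polynomial and therefore has at most two distinct eigenvalues.

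Now we use Zariski density to reach a contradiction. Consider the set
$$
Z=\{g\in\SL_d(\R)\mid g+g^{-1}\ \text{is scalar}\}.
$$
This is Zariski closed, since it is cut out by the vanishing of the off-diagonal entries of $g+g^{-1}$ and of the differences of its diagonal entries. The set $U=\{g\mid\det(g-I)\ne0\}$ is Zariski open and nonempty. The argument above shows $\Lambda_0\cap U\subset Z$. Since $\Lambda_0$ is Zariski dense, $\Lambda_0\cap U$ is still Zariski dense, because $\SL_d$ is irreducible. Therefore $Z=\SL_d(\R)$. But for $d\ge3$ the diagonal matrix $\operatorname{diag}(2,3,1/6,1,\ldots,1)$ has three distinct eigenvalues, so it is not in $Z$. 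This contradiction rules out the outer case, and so $\Lambda$ is inner.

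The main obstacle I expect is the bookkeeping in applying Lemma~\ref{lem:csarational}: we must justify that $b$ may be taken to be $g-I$, or at least that $b^*b$ and $(g-I)^*(g-I)$ differ by a factor in $k^\times$, and that the lemma applies to $\Lambda_0$ rather than to $\Lambda$. The parenthetical argument in the second paragraph handles the first point, and the equivalence of commensurating $\Lambda$ and $\Lambda_0$ handles the second. Note also that the argument genuinely needs $d\ge3$: for $d=2$, Cayley--Hamilton forces $g+g^{-1}=(\tr g)I$ for every $g\in\SL_2$, so no contradiction would arise.
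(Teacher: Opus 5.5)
Your proof is correct and is essentially the paper's argument: assuming an outer class, you apply Lemma~\ref{lem:minusone} and the unitary part of Lemma~\ref{lem:csarational} to $g-I$. You absorb the central factor in $L^\times$ via $t^*t\in k^\times$ exactly as the paper does, which forces $2I-g-g^{-1}$ to be scalar and contradicts Zariski density. The only difference is cosmetic: the paper fixes one $g\in\Lambda_0$ lying in both Zariski-open conditions at the outset, whereas you derive the scalar condition for all $g\in\Lambda_0$ with $\det(g-I)\ne0$ and then reach the contradiction through the Zariski-closed set $Z$.
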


\begin{proof}
Suppose instead that $\Lambda$ belongs to an outer class, and use $E$, $L/k$, $*$ and $\Lambda_0$ as above. Choose $g\in\Lambda_0$ such that
$$
\det(g-I)\ne0,\qquad g+g^{-1}\text{ is not scalar}.
$$
These conditions define a nonempty Zariski-open subset of $\SL_d$: the matrix $\operatorname{diag}(2,\ldots,2,2^{1-d})$ satisfies both. Borel density therefore gives such a $g$ in $\Lambda_0$.

By Lemma~\ref{lem:minusone}, $g-I$ commensurates $\Lambda$, and hence $\Lambda_0$. Lemma~\ref{lem:csarational} gives $g-I=tb$, where $t\in\R^\times$, $b\in E^\times$, and $b^*b\in k^\times$. Since $(g-I)b^{-1}=tI$ belongs to $E$ and is scalar, it belongs to the center of $E$. Thus $t\in L$, so applying the involution gives
$$
(g-I)^*(g-I)=t^*t\,b^*b\in k^\times.
$$
On the other hand, $g^*=g^{-1}$, and therefore
$$
(g-I)^*(g-I)=(g^{-1}-I)(g-I)=2I-g-g^{-1}.
$$
This is not scalar by our choice of $g$, a contradiction.
\end{proof}

\begin{proposition}\label{prop:sldcontainment}
Suppose that $\overline{T_\Lambda-T_\Lambda}\ne\R$. After conjugation, there is a central simple algebra $D/\Q$ of degree $d$ with $D\otimes_\Q\R=M_d(\R)$ such that $\Lambda\subset D^1$.
\end{proposition}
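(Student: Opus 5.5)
By Proposition~\ref{prop:inner} and Margulis arithmeticity, after conjugation there is a central simple algebra $D/\Q$ of degree $d$ with $D\otimes_\Q\R=M_d(\R)$, embedded in $M_d(\R)$, and an order $\OO_0\subset D$ such that $\Lambda$ is commensurable with $\OO_0^1$. Put $\Lambda_0=\Lambda\cap\OO_0^1$. This group has finite index in $\Lambda$, and it is Zariski dense by Borel density. We must show that every $g\in\Lambda$ lies in $D$; its reduced norm is then $\det g=1$, so $g\in D^1$.

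The plan mirrors Proposition~\ref{prop:tracefield}, with $g-I$ playing the role of the square root $R(g)$. First take $g\in\Lambda$ with $\det(g-I)\ne0$. By Lemma~\ref{lem:minusone}, $g-I$ commensurates $\Lambda$, and hence $\Lambda_0$. Lemma~\ref{lem:csarational} (first part, with $F=\Q$ and $E=D$) then gives $g-I=tb$ with $t\in\R^\times$ and $b\in D^\times$. To show $t\in\Q$ times something rational, I would also use that $g$ itself commensurates $\Lambda_0$, since $g\in\Lambda$. The same lemma gives $g=sc$ with $s\in\R^\times$ and $c\in D^\times$. Some power $g^m$ lies in $\Lambda_0\subset D$, and $g^m=s^mc^m$, so $s^m\in\Q$. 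This alone is weak, so the better route is to compare the two expressions. We have $tb+I=sc$, so $I,b,c$ are related by real scalars. If $b$ and $1$ are $\Q$-linearly independent in $D$, then $t b + 1 = s c$ forces real-linear relations among elements of $D$. Since $D\otimes_\Q\R=M_d(\R)$, $\Q$-independent elements of $D$ stay $\R$-independent. Writing $c=\beta b+\alpha$ over $\Q$ if $c\in\Q b+\Q$ (it must be, because $sc\in\R b+\R I$ and $\R b+\R I\cap D=\Q b+\Q$), we compare coefficients: $s\beta=t$ and $s\alpha=1$. Hence $s=\alpha^{-1}\in\Q$, and so $g=sc\in D$.

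The degenerate case is $b\in\Q$. Then $g-I$ would be scalar, which cannot happen for a generic $g$ and can be handled by the same coefficient argument anyway.

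It remains to handle $g\in\Lambda$ with $\det(g-I)=0$, where Lemma~\ref{lem:minusone} does not apply. I would write $g=(gh^{-1})h$ with $h\in\Lambda_0$ chosen so that both $\det(gh^{-1}-I)\ne0$ and $\det(h-I)\ne0$ hold. Each condition defines a nonempty Zariski-open subset of elements $h$: one contains diagonal $h$ with no eigenvalue $1$, and the other is the translate by $g$ of the set of elements without eigenvalue $1$. By Zariski density of $\Lambda_0$, such an $h$ exists. Then $gh^{-1}\in D$ by the first step, $h\in D$, and so $g\in D$.

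The main obstacle I expect is the rationality step: showing $s\in\Q$ from $g-I\in\R^\times D$ and $g\in\R^\times D$. This relies on the fact that $\R$-linear combinations of $\Q$-independent elements of $D$ are determined coefficientwise. I would verify this through the nondegenerate trace pairing, using that a $\Q$-basis of $D$ is an $\R$-basis of $M_d(\R)$.
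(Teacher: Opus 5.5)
Your proposal is correct and follows essentially the same route as the paper: you write both $g$ and $g-I$ as real multiples of elements of $D^\times$ via Lemma~\ref{lem:csarational}, compare coordinates in a $\Q$-basis of $D$ extending $\{1,b\}$ to force the scalar to be rational, and then reach an arbitrary $g$ by factoring $g=(gh^{-1})h$ with a Zariski-generic $h$. The differences are minor. Choosing $h\in\Lambda_0\subset D$ means only $gh^{-1}$ needs the generic condition. Your degenerate case is simpler than the phrase ``the same coefficient argument'' suggests: a real scalar matrix in $\SL_d(\R)$ is $\pm I\in D$, which is why the paper simply restricts to nonscalar $g$.
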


\begin{proof}
By Proposition~\ref{prop:inner} and the inner construction, we may assume that $\Lambda$ is commensurable with $\OO_0^1$ for an order $\OO_0\subset D$. Every element of $\Lambda$ commensurates $\OO_0^1$. Lemma~\ref{lem:csarational} therefore expresses every such element as a real scalar times an element of $D^\times$.

If $g\in\Lambda$ is nonscalar and $\det(g-I)\ne0$, then Lemma~\ref{lem:minusone} allows us to apply the same argument to $g-I$. We can therefore write
$$
g=sb,\qquad g-I=tc,
\qquad b,c\in D^\times,\quad s,t\in\R^\times.
$$
Hence
$$
c=\frac{s}{t}b-\frac1tI.
$$
Since $g$ is nonscalar, the elements $I,b$ are linearly independent over $\Q$ and can be extended to a $\Q$-basis of $D$. The resulting basis is also a real basis of $M_d(\R)$ under $D\otimes_\Q\R=M_d(\R)$. Comparing coordinates of $c\in D$ gives $s/t,1/t\in\Q$. Thus $s,t\in\Q$, so $g\in D$ and $\Nrd(g)=\det g=1$.

To prove the assertion for every element of $\Lambda$, let $U\subset\SL_d$ be the nonempty Zariski-open set of nonscalar matrices $x$ with $\det(x-I)\ne0$. For any $g\in\Lambda$, the conditions $h\in U$ and $gh^{-1}\in U$ define two nonempty open subsets of the irreducible algebraic variety $\SL_d$. Their intersection is nonempty and meets $\Lambda$ by Zariski density. The preceding argument puts both $h$ and $gh^{-1}$ in $D^1$, and therefore puts $g$ in $D^1$.
\end{proof}

\begin{proof}[Proof of Theorem~\ref{thm:sld}]
Condition (1) implies (2), since the reduced trace of an element of an order over $\Z$ is integral. The implications (2)$\Rightarrow$(3)$\Rightarrow$(4) are immediate. We prove (4)$\Rightarrow$(1).

By Proposition~\ref{prop:sldcontainment}, we may assume that $\Lambda\subset D^1$ and that $\Lambda$ is commensurable with $\OO_0^1$. For each $g\in\Lambda$, some positive power $g^r$ belongs to $\OO_0^1$, so its eigenvalues are algebraic integers. If $\alpha$ is an eigenvalue of $g$, then $\alpha^r$ is an eigenvalue of $g^r$. This implies that $\alpha$ is an algebraic integer. Thus $\tr g$ is an algebraic integer, and since $\tr g=\Trd(g)\in\Q$, we obtain $\tr g\in\Z$.

Choose $\gamma_1,\ldots,\gamma_{d^2}\in\Lambda$ forming a $\Q$-basis of $D$. The reduced trace pairing is nondegenerate, so $Q=(\Trd(\gamma_i\gamma_j))_{i,j=1}^{d^2}$ is an integral matrix with nonzero determinant $m$. If $g=\sum_i x_i\gamma_i\in\Lambda$, then every $\Trd(g\gamma_j)$ is an integer. Cramer's rule gives $x_i\in m^{-1}\Z$. Consequently $\OO=\operatorname{span}_{\Z}\Lambda$ is a full finitely generated $\Z$-module in $D$. As it contains $1$ and is closed under multiplication, it is an order.

Since $D\otimes_\Q\R=M_d(\R)$, the order $\OO$ is a lattice in the real vector space $M_d(\R)$, and $\OO^1$ is therefore discrete. It contains the lattice $\Lambda$, so we have $[\OO^1:\Lambda]<\infty$. This proves (1). The gap bound follows from integral traces.
\end{proof}

\section{A metric criterion for the commensurator}\label{sec:orbits}

The double-coset lemma also gives a geometric criterion for the commensurator of a lattice $\Gamma<\operatorname{Isom}^+(\mathbb H^n)$. A subset $A$ of a metric space $(X,d)$ is \emph{uniformly discrete} if there exists $\varepsilon>0$ such that $d(x,y)\ge\varepsilon$ for all distinct points $x,y\in A$. 

\begin{proposition}\label{prop:orbitsep}
Let $\Gamma<\operatorname{Isom}^+(\mathbb H^n)$ be a lattice, where $n\ge2$, and let $a\in\operatorname{Isom}(\mathbb H^n)$. The following are equivalent:
\begin{enumerate}
\item $a\in\Comm_{\operatorname{Isom}(\mathbb H^n)}(\Gamma)$;
\item $\Gamma o\cup a\Gamma o$ is uniformly discrete for some $o\in\mathbb H^n$;
\item $\Gamma o\cup a\Gamma o$ is uniformly discrete for every $o\in\mathbb H^n$.
\end{enumerate}
More precisely, for every $o\in\mathbb H^n$, the set of distances
$$
D_a(o)=\{d(\gamma o,a\delta o) \mid \gamma,\delta\in\Gamma\}
$$
is locally finite in $[0,\infty)$ if $a$ commensurates $\Gamma$, and is dense in $[0,\infty)$ otherwise.
\end{proposition}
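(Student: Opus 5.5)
The plan is to prove the distance-set dichotomy first and then deduce the equivalences from it. Write $X=\mathbb H^n$, $G=\operatorname{Isom}^+(X)$ and $K=\operatorname{Stab}(o)$. Every distance $d(\gamma o,a\delta o)$ equals $d(o,\gamma^{-1}a\delta o)$, so $D_a(o)$ is the image of the double coset $\Gamma a\Gamma$ under the continuous map $f:g\mapsto d(o,go)$. This map is defined on all of $\operatorname{Isom}(X)$ and is proper, because $K$ is compact.

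\emph{Commensurating case.} If $a\in\Comm(\Gamma)$, then $\Gamma$ and $a^{-1}\Gamma a$ are commensurable. This still holds when $a$ reverses orientation, since $a^{-1}\Gamma a$ is again a lattice in $G$. Lemma~\ref{lem:double} then shows that $\Gamma a\Gamma$ is a finite union $\bigcup_i \Gamma a\delta_i$ of right translates of $\Gamma$. To apply Lemma~\ref{lem:double} when $a\notin G$, I would first replace $a$ by $a\tau$, where $\tau$ is a reflection commensurating $\Gamma$. That may not exist, so a better route is to argue directly. Set $\Gamma'=\Gamma\cap a\Gamma a^{-1}$, of finite index in $\Gamma$. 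Then $a\Gamma a^{-1}$ is a finite union of cosets $\Gamma' c_j$ with $c_j\in a\Gamma a^{-1}$, so $a\Gamma o$ is contained in a finite union of orbits $\Gamma' x_j$, each of which lies inside $\Gamma y_j$. A finite union of discrete orbits of the properly discontinuous group $\Gamma$ is uniformly discrete. The reason is that $d(\gamma y_i,\gamma' y_j)=d(y_i,\gamma^{-1}\gamma' y_j)$, and only finitely many values below any bound occur. This proves (1)$\Rightarrow$(3), and the same bound shows $D_a(o)$ is locally finite: $\Gamma a\Gamma o$ is a discrete closed set, so only finitely many points lie in each ball around $o$. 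Also (3)$\Rightarrow$(2) is trivial.

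\emph{Non-commensurating case.} Suppose $a\notin\Comm(\Gamma)$. If $a\in G$, Lemma~\ref{lem:double} gives that $\Gamma a\Gamma$ is dense in $G$. If $a$ is orientation-reversing, fix any reflection $r$ and write $a=a'r$ with $a'\in G$. Then $\Gamma a\Gamma=\Gamma a'(r\Gamma r)r$. The groups $\Gamma$ and $r\Gamma r$ are lattices in $G$, and $(a')^{-1}\Gamma a'$ is not commensurable with $r\Gamma r$, because otherwise $a$ would commensurate $\Gamma$. So the two-lattice form of Lemma~\ref{lem:double} makes $\Gamma a'(r\Gamma r)$ dense in $G$, and hence $\Gamma a\Gamma$ is dense in $Gr$. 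In either case $f(\Gamma a\Gamma)$ is dense in $f(G)=[0,\infty)$, so $D_a(o)$ is dense.

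\emph{Failure of uniform discreteness.} To get (2)$\Rightarrow$(1), I would use density near distance $0$. The points $\gamma^{-1}a\delta o$ come arbitrarily close to $o$. If infinitely many of them are distinct from $o$, then the pairs $o$ and $\gamma^{-1}a\delta o$, translated by $\gamma$, give distinct points of $\Gamma o\cup a\Gamma o$ at arbitrarily small distance. A concern is that these points might coincide with $o$ exactly. However, $\Gamma a\Gamma$ is dense in $G$ (or in $Gr$), while $\{g: go=o\}=K$ (or $Kr$) has empty interior. So there are elements $g_j\in\Gamma a\Gamma$ with $0<d(o,g_jo)\to0$, and this settles it. It remains to check that the definition of uniform discreteness refers to the set $\Gamma o\cup a\Gamma o$ as a set of points, so that coincident points are harmless; that is the convention I would adopt.

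The main obstacle I expect is the orientation-reversing case, namely justifying the application of Shah's theorem there. I would handle it with the reflection trick just described.
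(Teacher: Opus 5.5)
Your proposal is correct and follows essentially the same route as the paper. In both, $D_a(o)$ is the image of $\Gamma a\Gamma$ under $g\mapsto d(o,go)$, Shah's lemma gives density when $a\notin\Comm(\Gamma)$, and a finite-union-of-$\Gamma$-orbits argument gives local finiteness and uniform discreteness when $a$ commensurates $\Gamma$.

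The paper handles orientation-reversing $a$ more simply. It writes $\Gamma a\Gamma=\Gamma(a\Gamma a^{-1})a$ and applies the lemma to the pair $\Gamma$, $a\Gamma a^{-1}$, so no reflection is needed. It also obtains distinct points at small distance directly from the density of $D_a(o)$ in $[0,\infty)$, which already meets every interval $(0,\varepsilon)$. Your argument that the stabilizer of $o$ has empty interior is therefore unnecessary, though harmless.
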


\begin{proof}
Put $G=\operatorname{Isom}^+(\mathbb H^n)$ and $\Gamma'=a\Gamma a^{-1}<G$. If $a\in\Comm_{\operatorname{Isom}(\mathbb H^n)}(\Gamma)$, then $\Gamma a\Gamma$ is a finite union of translates of $\Gamma$, and is therefore closed and discrete. Otherwise $\Gamma$ and $\Gamma'$ are not commensurable, so Lemma~\ref{lem:double} implies that $\Gamma\Gamma'$ is dense in $G$. Right translation by $a$ then shows that $\Gamma a\Gamma=\Gamma\Gamma'a$ is dense in the component $Ga$. This argument applies whether or not $a$ preserves orientation.

Fix any point $o\in\mathbb H^n$. Note that $D_a(o)=\{d(o,bo)\mid b\in\Gamma a\Gamma\}.$ The function $b\mapsto d(o,bo)$ is continuous and maps either component of the isometry group onto $[0,\infty)$. Thus, density of the double coset gives density of $D_a(o)$. If instead the double coset is closed and discrete, its intersection with $\{b\in\operatorname{Isom}(\mathbb H^n) \mid d(o,bo)\le R\}$ is finite for every $R\ge0$, since this set is compact by properness of the action. Hence, $D_a(o)$ is locally finite.

The orbit $\Gamma o$ is uniformly discrete. Indeed, only finitely many elements of $\Gamma$ move $o$ a distance at most one, so their nonzero displacements, if any, have a positive minimum. The orbit $a\Gamma o$ has the same separation constant. If $a$ commensurates $\Gamma$, local finiteness of $D_a(o)$ also gives a positive lower bound for the nonzero cross-distances, so $\Gamma o\cup a\Gamma o$ is uniformly discrete. Otherwise density of $D_a(o)$ gives positive cross-distances tending to zero, which prevents uniform discreteness. Since $o$ was arbitrary, this proves all the assertions.
\end{proof}

\begin{corollary}\label{cor:finiteorbits}
Let $\Gamma<\operatorname{Isom}^+(\mathbb H^n)$ be a lattice, where $n\ge2$, and let $F<\operatorname{Isom}(\mathbb H^n)$ be a finite subgroup. The following are equivalent:
\begin{enumerate}
\item $F\subset\Comm_{\operatorname{Isom}(\mathbb H^n)}(\Gamma)$;
\item $\bigcup_{f\in F}f\Gamma o$ is uniformly discrete for some $o\in\mathbb H^n$;
\item $\bigcup_{f\in F}f\Gamma o$ is uniformly discrete for every $o\in\mathbb H^n$.
\end{enumerate}
\end{corollary}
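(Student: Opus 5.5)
The plan is to reduce Corollary~\ref{cor:finiteorbits} to Proposition~\ref{prop:orbitsep}, applied to each pair of elements of $F$. The key observation is that uniform discreteness of a finite union of sets is controlled by pairwise unions. Concretely, $\bigcup_{f\in F}f\Gamma o$ is uniformly discrete if and only if $f\Gamma o\cup f'\Gamma o$ is uniformly discrete for every pair $f,f'\in F$. The forward direction is trivial, since subsets of a uniformly discrete set are uniformly discrete. For the converse, take the minimum of the finitely many separation constants, which is positive because $F$ is finite.

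Next we translate each pairwise condition into the form of Proposition~\ref{prop:orbitsep}. Since $f$ is an isometry, $f\Gamma o\cup f'\Gamma o$ is uniformly discrete if and only if $\Gamma o\cup f^{-1}f'\Gamma o$ is, with the same constant. By the proposition, this holds for some (equivalently, every) $o$ exactly when $f^{-1}f'\in\Comm_{\Isom(\HH^n)}(\Gamma)$. Some care is needed with the quantifier over $o$ in (2). If the union is uniformly discrete for a fixed $o$, then each pairwise union is uniformly discrete for that same $o$. Implication (2)$\Rightarrow$(3) of the proposition then upgrades each pairwise statement to every $o$, and the finite-minimum argument reassembles the full union for every $o$.

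It remains to show that (1) is equivalent to the condition that $f^{-1}f'$ commensurates $\Gamma$ for all $f,f'\in F$. This step is immediate because $F$ is a group. If $F\subset\Comm(\Gamma)$, then every $f^{-1}f'$ lies in $F\subset\Comm(\Gamma)$. Conversely, take $f'$ arbitrary and $f=\mathrm{id}$; this gives $f'\in\Comm(\Gamma)$. The identity lies in $F$, so this choice is allowed.

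Assembling the pieces gives (1)$\Rightarrow$(3)$\Rightarrow$(2)$\Rightarrow$(1). There is no serious obstacle here. The only point needing attention is that $F$ may contain orientation-reversing elements. Proposition~\ref{prop:orbitsep} already allows $a\in\Isom(\HH^n)$, and the commensurator is taken in $\Isom(\HH^n)$, so these elements are covered.
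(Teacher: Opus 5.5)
Your proposal is correct and follows essentially the same route as the paper. The paper also translates each pair $f\Gamma o\cup f'\Gamma o$ by $f^{-1}$ and applies Proposition~\ref{prop:orbitsep} to $f^{-1}f'$; it uses $\mathrm{id}\in F$ for (2)$\Rightarrow$(1) and takes the minimum of finitely many separation constants for (1)$\Rightarrow$(3).
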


\begin{proof}
Suppose that the union in (2) is uniformly discrete. Since the identity belongs to $F$, each pair $\Gamma o\cup f\Gamma o$ is uniformly discrete, and Proposition~\ref{prop:orbitsep} gives (1). Conversely, suppose that (1) holds and fix any $o\in\mathbb H^n$. For $f,f'\in F$, the element $f^{-1}f'$ lies in the commensurator. The proposition therefore implies that $\Gamma o\cup f^{-1}f'\Gamma o$ is uniformly discrete, and applying $f$ gives the same conclusion for $f\Gamma o\cup f'\Gamma o$. Taking the minimum of the separation constants for these finitely many pairs proves (3). Since (3) implies (2), the equivalence follows.
\end{proof}

The $fc$-subspaces of \cite{BBKS} are fixed-point sets of finite subgroups of $\Comm(\Gamma)$ and are used there to give a geometric characterization of arithmetic hyperbolic orbifolds. If such a subspace has dimension at least two, its stabilizer in $\Gamma$ acts on it with finite covolume \cite[Theorem 1.2]{BBKS}. Corollary~\ref{cor:finiteorbits} expresses the condition $F\subset\Comm(\Gamma)$ for a finite group $F$ in terms of uniform discreteness of $\bigcup_{f\in F}f\Gamma o$, for any $o\in\mathbb H^n$.

\begin{remark}
Uniform discreteness cannot be replaced here by bounded clustering. Indeed, for fixed $o\in\mathbb H^n$ and $R>0$, a packing argument gives a bound $N_R$ on the number of points of $\Gamma o$ in any ball of radius $R$. A union of $k$ isometric images of this orbit therefore has at most $kN_R$ points in any such ball. In particular, $\Gamma o\cup a\Gamma o$ satisfies this condition for every isometry $a$, whether or not $a$ commensurates $\Gamma$.
\end{remark}

\begin{remark}\label{rem:bisector}
Let $\Gamma<\operatorname{Isom}^+(\mathbb H^n)$ be a lattice, where $n\ge2$, and let $g,h\in\Gamma$ be loxodromic elements whose distinct axes $L_g,L_h$ meet at a point $o$. These axes project to intersecting closed geodesics in $\Gamma\backslash\mathbb H^n$. Suppose that their translation lengths $\ell(g)$ and $\ell(h)$ have irrational ratio. Let $r$ be the reflection in a hyperplane through $o$ which interchanges $L_g$ and $L_h$. In dimension two, this is reflection in either angle bisector. Then $r\notin\Comm_{\operatorname{Isom}(\mathbb H^n)}(\Gamma).$

To see this using Proposition~\ref{prop:orbitsep}, replace $h$ by $h^{-1}$ if necessary so that $r$ maps the positive direction of $L_g$ to the positive direction of $L_h$. By irrationality, there are positive integers $m_j,n_j$ such that $0<|m_j\ell(g)-n_j\ell(h)|\longrightarrow0.$ The points $rg^{m_j}o$ and $h^{n_j}o$ lie on $L_h$, at distances $m_j\ell(g)$ and $n_j\ell(h)$ from $o$ in the same direction. Hence
$$
0<d\bigl(rg^{m_j}o,h^{n_j}o\bigr)
=|m_j\ell(g)-n_j\ell(h)|\longrightarrow0.
$$
Thus $\Gamma o\cup r\Gamma o$ is not uniformly discrete, and the proposition gives the assertion. The intersection point $o$ is convenient for this calculation, but the conclusion of the proposition then shows that $\Gamma x\cup r\Gamma x$ is not uniformly discrete for any $x\in\mathbb H^n$.

There is also a direct proof from commensurability. If $r$ commensurated $\Gamma$, then $rg^kr^{-1}\in\Gamma$ for some positive integer $k$. This element and $h$ preserve $L_h$ and its orientation, and their translation lengths are $k\ell(g)$ and $\ell(h)$. The signed translations along $L_h$ arising from its orientation-preserving stabilizer in $\Gamma$ form a discrete subgroup of $\mathbb R$. Indeed, for every $R>0$, the isometries preserving $L_h$ and its orientation with absolute translation at most $R$ form a compact set, whose intersection with $\Gamma$ is finite. Consequently $k\ell(g)$ and $\ell(h)$ are integer multiples of the same positive number, contradicting irrationality. This argument only requires $\Gamma$ to be discrete. \end{remark}

\section{Length gaps and spectral consequences}\label{sec:lengths}

We retain the notation $\mathcal L(\Gamma)$ and $\mathcal A(\Gamma)$ from the introduction. The lengths in $\mathcal L(\Gamma)$ are not restricted to primitive closed geodesics. The identity $|\tr g|=2\cosh(\ell(g)/2)$ relates trace gaps to separation of distinct lengths.

\begin{proof}[Proof of Theorem~\ref{thm:lengths}(1)]
Write $f(x)=2\cosh(x/2)$, so $f'(x)=\sinh(x/2)$. Suppose that $\Gap(T(\Gamma))\ge\eta>0$. For distinct $\ell,\ell'\in\mathcal L(\Gamma)$, put $L=\max\{\ell,\ell'\}$. The mean value theorem gives
$$
\eta\le |f(\ell)-f(\ell')|
\le \sinh(L/2)|\ell-\ell'|
\le \tfrac12e^{L/2}|\ell-\ell'|.
$$
This proves \eqref{eq:lengthsep} with $c=2\eta$.

Conversely, assume \eqref{eq:lengthsep}. We first recall why $\mathcal L(\Gamma)$ has only finitely many values in each bounded interval. Truncating the cusps of the finite-area orbifold $S=\Gamma\backslash\mathbb H^2$ gives a compact subset $C\subset S$ meeting every closed geodesic, since no closed geodesic is contained in a cusp. Choose a compact set $K\subset\mathbb H^2$ whose image contains $C$, fix $o\in\mathbb H^2$, and put $D=\max_{x\in K}d(o,x)$. Every closed geodesic of length at most $R$ corresponds to a hyperbolic element $g\in\Gamma$ which, after conjugation in $\Gamma$, has its axis meeting $K$. For a point $x$ in this intersection,
$$
d(o,go)\le 2d(o,x)+d(x,gx)\le 2D+R.
$$
Since the discrete group $\Gamma$ acts properly discontinuously on $\mathbb H^2$, only finitely many elements of $\Gamma$ move $o$ a distance at most $2D+R$. Thus, there are only finitely many conjugacy classes of hyperbolic elements with translation length at most $R$. In particular, $\mathcal L(\Gamma)$ is locally finite and has a positive minimum $\ell_0$.

For $x<y$ in $\mathcal L(\Gamma)$ with $y-x\ge1$, we have
$$
f(y)-f(x)\ge f(\ell_0+1)-f(\ell_0)>0.
$$
If $y-x<1$, then \eqref{eq:lengthsep} gives
$$
f(y)-f(x)\ge \sinh(x/2)(y-x)
\ge \frac{c}{2}e^{-(y-x)/2}(1-e^{-x})
\ge \frac{c}{2}e^{-1/2}(1-e^{-\ell_0})>0.
$$
Both lower bounds are independent of $x$ and $y$. Denote their minimum by $\eta_h>0$. Distinct positive hyperbolic trace values therefore differ by at least $\eta_h$, and the same holds for distinct negative hyperbolic trace values.

The set $E=T(\Gamma)\cap[-2,2]$ is finite: elliptic elements are conjugate into the stabilizers of the finitely many cone points of $S$, and the remaining trace values in this interval are $\pm2$. Let $\eta_E>0$ be the smallest distance between distinct elements of $E$. Every positive hyperbolic trace is at least $f(\ell_0)>2$, and every negative hyperbolic trace is at most $-f(\ell_0)$. Consequently,
$$
\Gap(T(\Gamma))
\ge \min\{\eta_h,\eta_E,f(\ell_0)-2\}>0.
$$
Theorem~\ref{thm:main} now implies that $\Gamma$ is derived from a quaternion algebra.

Finally, suppose that $\Gamma$ is not derived from a quaternion algebra. By the equivalence just proved and Theorem~\ref{thm:main}, no positive constant $c$ satisfies \eqref{eq:lengthsep}. We can therefore choose $\ell_j<\ell'_j$ in $\mathcal L(\Gamma)$ such that $0<e^{\ell'_j/2}(\ell'_j-\ell_j)<\frac1j.$ Since there are only finitely many length values in every bounded interval, we must have $\ell'_j\to\infty$. The above inequality also gives $\ell'_j-\ell_j\to0$, and hence $\ell_j\to\infty$.
\end{proof}

\begin{proof}[Proof of Theorem~\ref{thm:lengths}(2)]
Put $f(x)=2\cosh(x/2)$, $P=f(\mathcal L(\Gamma))$, and $T=T(\Gamma)$. The set $P$ is locally finite, and $T\setminus(P\cup(-P))$ is finite, consisting of $\pm2$ and traces of elliptic or parabolic elements. Hence $T$ is locally finite.

Suppose first that $\Gamma$ is not derived from a quaternion algebra, so that $T-T$ is dense in $\R$. For every $M>0$, we claim that 
\begin{equation}\label{eq:taildensity}
\overline{(P\cap(M,\infty))-(P\cap(M,\infty))}=\R.
\end{equation}
To prove this, fix a bounded open interval $I$ and put $R=\sup_{z\in I}|z|$. A difference in $I$ involving a trace of absolute value at most $\max\{M,2\}$ has both traces with absolute values bounded by $\max\{M,2\}+R$. A difference in $I$ involving traces of opposite signs has both absolute values at most $R$, since they add to the absolute value of the difference. Local finiteness gives only finitely many difference values arising in these ways. Choose a nonempty open subinterval of $I$ avoiding them, and use density of $T-T$. The resulting difference is realized by two elements of $P$ or by two elements of $-P$, both of absolute value greater than $M$. In the latter case, changing signs and reversing their order realizes the same difference using elements of $P$. This proves \eqref{eq:taildensity}.

Fix $a\ge0$. By \eqref{eq:taildensity}, there are $p_j<q_j$ in $P$ such that $p_j\to\infty$ and $q_j-p_j\to a/2$; when $a=0$, choose strictly positive differences tending to zero. Set $u_j=f^{-1}(p_j)$ and $v_j=f^{-1}(q_j)$. Since $(f^{-1})'(t)=2/\sqrt{t^2-4}$, the mean value theorem gives $p_j<\xi_j<q_j$ with
$$
e^{v_j/2}(v_j-u_j)=2(q_j-p_j)\frac{e^{v_j/2}}{\sqrt{\xi_j^2-4}}.
$$
Since $q_j-p_j$ is bounded, $\xi_j/q_j\to1$, while $e^{v_j/2}/q_j\to1$ by the definition of $f$. The expression above therefore tends to $a$, proving the assertion for lattices which are not derived from a quaternion algebra.

Now suppose that $\Gamma$ is derived from a quaternion algebra over a totally real field $k$ of degree $n$, and put $D=P-P$. Every element of $P$ is an algebraic integer in $k$, with absolute value at most two at every embedding other than the given inclusion $k\subset\R$. For distinct $z,z'\in D$, write $z=p_1-p_2$ and $z'=p_3-p_4$, where $p_1,p_2,p_3,p_4\in P$. Then $z-z'$ is a nonzero algebraic integer in $k$, and for every other embedding $\sigma:k\to\R$,
$$
|\sigma(z-z')| =|\sigma(p_1)-\sigma(p_2)-\sigma(p_3)+\sigma(p_4)| \le \sum_{i=1}^4|\sigma(p_i)| \le 8.
$$
Taking the field norm gives
$$
1\le |N_{k/\Q}(z-z')| \le |z-z'|\,8^{n-1}.
$$
Hence $D$ is closed and uniformly discrete, with separation at least $8^{1-n}$.

If $u_j<v_j$ tend to infinity and $e^{v_j/2}(v_j-u_j)\to a<\infty$, then $v_j-u_j\to0$. Applying the mean value theorem to $f$ gives $f(v_j)-f(u_j)\to a/2.$ Indeed, for $u_j<\zeta_j<v_j$, we have $e^{-v_j/2}\sinh(\zeta_j/2)\to1/2$. The differences $f(v_j)-f(u_j)$ belong to $D$ and converge, so uniform discreteness forces them to be eventually constant. Since they are positive, we obtain
$$
\mathcal A(\Gamma)\subset 2(D\cap(0,\infty)).
$$
Every positive element of $D$ is at least $\Gap(T(\Gamma))$, so every element of $\mathcal A(\Gamma)$ is at least $2\Gap(T(\Gamma))$. Moreover, distinct elements of $\mathcal A(\Gamma)$ are separated by at least $2\cdot8^{1-n}$.
\end{proof}

\begin{proof}[Proof of Corollary~\ref{cor:moduli}]
By Theorem~\ref{thm:main}, every surface in $\mathcal E_g$ is arithmetic. Since each such surface has area $4\pi(g-1)$, Borel's finiteness theorem \cite[Theorem 8.2]{Borel} gives finitely many possibilities up to isometry. If $\mathcal E_g$ is nonempty, take $c_g=2\min_{S\in\mathcal E_g}\Gap(S)$ and apply Theorem~\ref{thm:lengths}(1); otherwise take any $c_g>0$. The last assertion is Theorem~\ref{thm:lengths}(2).
\end{proof}

\begin{remark}\label{cor:squarecover}
Let $S=\Gamma\backslash\mathbb H^2$ be a closed orientable hyperbolic surface, and let $S_2\to S$ be its mod-two homology cover. Its fundamental group is $\Gamma^{(2)}$, so the classical characterization of arithmeticity by $\Gamma^{(2)}$ being derived from a quaternion algebra \cite[Theorem 5.3]{Reid}, together with Theorem~\ref{thm:main}, gives
$$
S\text{ is arithmetic}\quad\Longleftrightarrow\quad\Gap(S_2)>0.
$$
Equivalently, $S$ is arithmetic if and only if some finite cover of $S$ has positive trace gap. 
\end{remark}

\section{An arithmetic example with zero trace gap}\label{sec:example}

Let $\pi:\mathrm{SL}_2(\mathbb R)\to\mathrm{PSL}_2(\mathbb R)$ be the quotient map by $\{\pm I\}$, where $I$ is the identity matrix. Consider the congruence subgroup
$$
\widetilde{\Gamma}_0(2)
=
\left\{
\begin{pmatrix}a&b\\2c&d\end{pmatrix}
\mid a,b,c,d\in\mathbb Z,\quad ad-2bc=1
\right\},
\qquad
\Gamma_0(2)=\pi(\widetilde{\Gamma}_0(2)).
$$
Define
$$
W=\frac1{\sqrt2}
\begin{pmatrix}0&-1\\2&0\end{pmatrix} \in\mathrm{SL}_2(\mathbb R),
\qquad \Gamma=\langle\Gamma_0(2),\pi(W)\rangle.
$$
Thus $\Gamma$ is the subgroup generated by $\Gamma_0(2)$ and the projective class of $W$. It is called the Fricke extension of $\Gamma_0(2)$; see \cite[\S1]{CMS}.

For $g=\begin{pmatrix}a&b\\2c&d\end{pmatrix}\in\widetilde{\Gamma}_0(2)$, direct computation gives
$$
WgW^{-1} =
\begin{pmatrix}
d&-c\\
-2b&a
\end{pmatrix} \in\widetilde{\Gamma}_0(2).
$$
Moreover, $W^2=-I$. Hence $W$ normalizes $\widetilde{\Gamma}_0(2)$, and the preimage $\widetilde{\Gamma}=\pi^{-1}(\Gamma)$ has the disjoint coset decomposition $\widetilde{\Gamma} = \widetilde{\Gamma}_0(2) \sqcup W\widetilde{\Gamma}_0(2).$ In particular, $[\Gamma:\Gamma_0(2)]=2$. Since $\Gamma_0(2)$ has finite index in $\mathrm{PSL}_2(\mathbb Z)$, the group $\Gamma$ is an arithmetic Fuchsian lattice: it is discrete, has finite covolume, and is commensurable with $\mathrm{PSL}_2(\mathbb Z)$.

Let us its full signed trace set $T(\Gamma)$. The equation $ad-2bc=1$ implies that $a$ and $d$ are odd, so every element of $\widetilde{\Gamma}_0(2)$ has even integral trace. Conversely, for every $r\in\mathbb Z$, the matrix
$$
A_r=
\begin{pmatrix}1&1\\2r-2&2r-1\end{pmatrix}
\in\widetilde{\Gamma}_0(2)
$$
has trace $2r$. Thus the trace set of this subgroup is exactly $2\mathbb Z$.

For the other coset, we have $\tr (Wg)=\sqrt2(b-c).$
Every value in $\sqrt2\,\mathbb Z$ occurs, since
$$
\operatorname{tr}\left(
W\begin{pmatrix}1&m\\0&1\end{pmatrix}
\right)=\sqrt2\,m
\qquad(m\in\mathbb Z).
$$

Consequently,
$$
T(\Gamma)=2\mathbb Z\cup\sqrt2\,\mathbb Z.
$$

To show that $\operatorname{Gap}(T(\Gamma))=0$, let the positive integers $m_j$ and $r_j$ be defined by $m_j+r_j\sqrt2=(3+2\sqrt2)^j$ for $j\ge1.$ Taking the product with the conjugate expression gives $m_j^2-2r_j^2=1$. Therefore the distinct trace values $\sqrt2\,m_j$ and $2r_j$ satisfy
$$
0<\sqrt2\,m_j-2r_j = \frac{\sqrt2}{m_j+\sqrt2\,r_j}
\longrightarrow0.
$$
Both traces exceed two, so these small gaps already occur among traces of hyperbolic elements.

The group nevertheless satisfies the bounded clustering property. Indeed, each interval of length one contains at most one point of $2\mathbb Z$ and at most one point of $\sqrt2\,\mathbb Z$. Thus, $\#\bigl(T(\Gamma)\cap[x,x+1]\bigr)\le2$ for all $x\in\mathbb R$.

We can also see directly that $\Gamma$ is not derived from a quaternion algebra. Its ordinary trace field and invariant trace field are, respectively,
$$
L_\Gamma =\mathbb Q(\operatorname{tr}(g) \mid g\in\widetilde{\Gamma}) =\mathbb Q(\sqrt2), \qquad k_\Gamma =\mathbb Q((\operatorname{tr}(g))^2 \mid g\in\widetilde{\Gamma}) = \mathbb Q.
$$
Since the two fields differ in this example, $\Gamma$ is arithmetic but not derived; see the proof of \cite[Theorem 4, p.~187]{Vinberg}.

Finally, its trace difference set is
$$
T(\Gamma)-T(\Gamma)=2\mathbb Z+\sqrt2\,\mathbb Z=\{2r+\sqrt2\,m \mid r,m\in\mathbb Z\},
$$
which is dense in $\mathbb R$. Thus, arithmeticity and bounded clustering are compatible with both zero trace gap and density of trace differences.

\bibliographystyle{siam}
\bibliography{biblio}
\end{document}